\documentclass[10pt]{article}

\usepackage[T1]{fontenc}
\usepackage{lmodern}
\usepackage[margin=1in]{geometry}
\usepackage{graphicx}
\usepackage{amsmath,amssymb,amsfonts}
\usepackage{amsthm}
\usepackage{mathtools}
\usepackage{booktabs}
\usepackage{array}
\usepackage{xcolor}
\usepackage{tikz}
\usetikzlibrary{positioning,calc}
\usepackage[
    colorlinks=true,
    linkcolor=blue,
    citecolor=blue,
    urlcolor=blue
]{hyperref}

\theoremstyle{plain}
\newtheorem{theorem}{Theorem}[section]
\newtheorem{lemma}[theorem]{Lemma}
\newtheorem{proposition}[theorem]{Proposition}
\newtheorem{corollary}[theorem]{Corollary}

\theoremstyle{definition}
\newtheorem{definition}{Definition}

\theoremstyle{remark}
\newtheorem{remark}{Remark}

\newcommand{\angles}[1]{\left\langle #1\right\rangle}
\newcommand{\TG}{\mathcal T_G}
\newcommand{\TsubH}{\mathcal T_H}
\newcommand{\WH}{\mathcal W_H}
\newcommand{\Mstar}{M_G^{\ast}}
\newcommand{\mugen}{\mu_G^{\mathrm{gen}}}
\newcommand{\DkappaG}{D_{\kappa_G}}
\newcommand{\NkappaG}{N_{\kappa_G}}
\newcommand{\Dset}{\mathcal D}
\newcommand{\Hset}{\mathcal H}
\newcommand{\Nset}{\mathcal N}
\newcommand{\Fp}{\mathbb F_p}
\newcommand{\Z}{\mathbb Z}

\DeclareMathOperator{\ord}{ord}
\DeclareMathOperator{\StabL}{Stab_L}
\DeclareMathOperator{\StabR}{Stab_R}

\hypersetup{
  pdftitle={Minimal Product Sets in Finite Metacyclic Groups},
  pdfauthor={Santiago Jimenez Ramos},
  pdfkeywords={minimum product sets, finite metacyclic groups, additive combinatorics, critical subset trios, finite groups}
}
\begin{document}

\title{Minimal Product Sets in Finite Metacyclic Groups}
\author{Santiago Jimenez Ramos\\
Centro de Ci\^encias Exatas e da Natureza, Universidade Federal de Pernambuco\\
Recife, Pernambuco, Brazil\\
\texttt{santiago.jimenez@ufpe.br}}
\date{}

\maketitle

\begin{abstract}
Minimum product-set problems ask how small a product of two subsets of prescribed cardinalities can be in a finite group. Although the problem is completely understood for finite abelian groups, exact results in the nonabelian setting are available only for particular classes. We determine the minimum product-set function $\mu_G$ for every finite metacyclic group, without assuming that the relevant cyclic extension splits. Our proof converts the extremal problem into a finite recursive arithmetic description of the feasible cardinality vectors of subset trios. The recursion combines the subgroup structure of finite metacyclic groups with the structural classification of maximal critical subset trios. The resulting characterization recovers the previously known formulas for abelian and nilpotent groups, dihedral groups, and nonabelian groups of order \(pq\). It also yields metacyclic examples in which the minimum product-set size lies strictly between the classical lower and upper arithmetic bounds for finite solvable groups. For faithful split metacyclic groups, we further show that requiring one factor to be symmetric, contain the identity, and generate the whole group can force a strictly larger product set than the unrestricted minimum. The latter constraint admits a vertex-isoperimetric interpretation on connected Cayley graphs.
\end{abstract}

\noindent\textbf{Keywords:} Minimum product sets, finite metacyclic groups, additive combinatorics, critical subset trios, finite groups.\par
\smallskip
\noindent\textbf{2020 Mathematics Subject Classification:} 05E16, 11B30, 20D60.

\section{Introduction}\label{sec:introduction}

For subsets $A,B$ of a finite group $G$, write
\(
AB=\{ab:a\in A,\ b\in B\}.
\)
For integers $1\le r,s\le |G|$, define
\[
\mu_G(r,s)
=
\min\bigl\{
|AB|:A,B\subseteq G,\ |A|=r,\ |B|=s
\bigr\}.
\]
The problem is to determine how small a product set can be when the
cardinalities of its two factors are prescribed.  In abelian groups this
question is closely controlled by subgroup orders, while in nonabelian groups
the exact value can depend more delicately on the structure of the ambient
group.

The classical case is the Cauchy--Davenport theorem for cyclic groups of prime
order \cite{Cauchy1813,Davenport1935}.  Eliahou, Kervaire and Plagne
\cite{EliahouKervairePlagne2003} determined $\mu_G$ for every finite abelian
group.  Eliahou and Kervaire later established arithmetic bounds for finite
solvable groups and determined the dihedral case
\cite{EliahouKervaire2007,EliahouKervaire2010}.  Among further nonabelian
classes, Deckelbaum \cite{Deckelbaum2009} studied groups of order $pq$, and
Berchenko--Kogan \cite{BerchenkoKogan2012} completed that case and determined
$\mu_G$ for finite nilpotent groups.  More recently, Benavides and Mutis
\cite{BenavidesMutis2025} treated a family of nonabelian metacyclic groups in
which the cyclic quotient acts by inversion.  The corresponding problem for
general finite metacyclic groups, where the action need not be inversion and
the relevant cyclic extension need not split, remains outside these formulas.

The main objective of this paper is to determine $\mu_G(r,s)$ for every finite
metacyclic group $G$ and every admissible pair $(r,s)$.  We do this by replacing
the original minimization problem by an exact description of the possible
cardinalities of subset trios.  For each finite metacyclic group $G$ we define
recursively a finite set
$
\mathcal T_G\subseteq\{0,\ldots,|G|\}^3
$
and prove that
\[
\mathcal T_G
=
\bigl\{
(|A|,|B|,|C|):
(A,B,C)\text{ is a subset trio in }G
\bigr\}.
\]
Consequently,
\[
\mu_G(r,s)
=
|G|-\max\{t:(r,s,t)\in\mathcal T_G\}.
\]
Thus the determination of the minimum product-set function becomes a finite
recursive calculation on subgroup and quotient data.

The construction uses two structural results.  DeVos's classification of
maximal critical subset trios \cite{DeVos2013} describes the configurations
that may occur when product growth is small, including beats, cyclic chords,
dihedral chords and a finite collection of terminal cases.  Yang’s parametrization of the subgroups of finite metacyclic groups \cite{Yang2020} provides the explicit subgroup and quotient data required by the recursion.  The rules defining $\mathcal T_G$ are obtained
from these configurations, and the proof consists in showing that they are
both realizable and exhaustive.

Besides recovering the previously determined abelian, nilpotent, dihedral and
order-$pq$ cases, the exact formula exhibits behavior not detected by the
classical arithmetic bounds.  In particular, there are finite metacyclic
groups and parameters $r,s$ for which
\[
\DkappaG(r,s)
<
\mu_G(r,s)
<
\NkappaG(r,s),
\]
so the minimum lies strictly between the lower and upper
Eliahou--Kervaire bounds.

The exact determination also allows us to compare unrestricted minimizers with
sets satisfying a natural generation condition.  For the faithful split
family
\[
G=C_p\rtimes_h C_m,
\qquad
\operatorname{ord}_p(h)=m>2,
\]
we prove that, if $m<r$ and $2r-1\le p$, then every pair
$A,B\subseteq G$ with
\[
|A|=|B|=r,\qquad
1\in B,\qquad
B=B^{-1},\qquad
\langle B\rangle=G
\]
satisfies
\(
|AB|\ge 2r.
\)
In the same range, the unrestricted minimum equals $2r-2$ when $m$ and $r$
are even and $2r-1$ otherwise.  Hence the unrestricted minimum cannot be
attained by a symmetric generating second factor.  This constrained problem
has a direct Cayley-graph interpretation: if $S=B\setminus\{1\}$, then
$\operatorname{Cay}(G,S)$ is connected and
\[
|AB|-|A|=|AS\setminus A|,
\]
so the difference is an outer vertex boundary.  This places the generating
constraint within the isoperimetric viewpoint on small product sets developed
by Hamidoune \cite{Hamidoune1996,Hamidoune1999}.

We establish this generation-constrained comparison for the faithful split family in the diagonal range above.

Section~\ref{sec:supporting} collects the structural results used in the
proof.  Section~\ref{sec:region} defines the recursive region $\mathcal T_G$.
Sections~\ref{sec:soundness} and~\ref{sec:completeness} prove its soundness and
completeness, and Section~\ref{sec:main} derives the exact formula for
$\mu_G$.  Section~\ref{sec:special} records special cases and the additional
consequences described above.
\section{Definitions and supporting results}\label{sec:supporting}

Groups in the recursive construction are finite and written multiplicatively; auxiliary additive groups will be specified explicitly.

\subsection{Product-set bounds}

We use two results of Eliahou and Kervaire for finite solvable groups. The first provides the elementary trios used at the base of the recursion.

\begin{theorem}[Eliahou--Kervaire \cite{EliahouKervaire2006}]\label{thm:ek-small}
Let $G$ be a finite solvable group. For every $1\le r,s\le |G|$, there exist subsets $A,B\subseteq G$ such that $|A|=r$, $|B|=s$, and $|AB|\le r+s-1$.
\end{theorem}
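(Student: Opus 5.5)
We argue by induction on $|G|$, using a subnormal series with cyclic factors of prime order, and build the sets by a lexicographic (digit-by-digit) construction.

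\textbf{Base case.} If $G$ is cyclic of order $n$, identify $G$ with $\Z/n\Z$. Take $A=\{0,\dots,r-1\}$ and $B=\{0,\dots,s-1\}$. Then $AB\subseteq\{0,\dots,r+s-2\}$, so $|AB|\le\min(n,r+s-1)$.

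\textbf{Inductive step.} For general solvable $G$, choose a normal subgroup $N\trianglelefteq G$ with $G/N$ cyclic of prime order $p$; one exists because the derived series is nontrivial. By induction the statement holds for $N$. Let $g$ be a coset representative generating $G/N$, so the cosets are $g^iN$ for $0\le i<p$. Write $r=an+b$ and $s=cn+d$ with $n=|N|$ and $0\le b,d<n$.

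\textbf{Choosing the sets.}
\begin{itemize}
\item Let $A$ consist of the full cosets $g^iN$ for $0\le i<a$, together with a subset $g^aA_0$, where $A_0\subseteq N$ and $|A_0|=b$.
\item Let $B$ be built symmetrically: full cosets $Ng^j$ for $j<c$, together with $B_0g^c$ (equivalently $g^cB_0'$ by normality), where $|B_0|=d$.
\end{itemize}

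\textbf{Counting $|AB|$.} Using normality of $N$, the product $g^iN\cdot Ng^j$ is a full coset $g^{i+j}N$. Among the products, only $g^aA_0\cdot B_0g^c=g^{a}(A_0\,{}^{g^a}\!B_0)g^{c}$ lands in the coset $g^{a+c}N$ as a possibly partial set. Each mixed product of a full coset with a partial piece gives a full coset. So $AB$ is covered by full cosets indexed by $i+j$, ranging over at most $a+c$ or $a+c+1$ indices mod $p$, plus one partial coset. That partial coset has size at most $\mu_N(b,d')$, where $d'=d$ and $^{g^a}B_0$ is an arbitrary $d$-subset of $N$, so we may choose it optimally. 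By induction this is at most $b+d-1$.

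\textbf{Case analysis.} The key step is the bookkeeping. There are three cases: (i) $b,d>0$; (ii) one of $b,d$ is zero; (iii) carries, where $b+d-1\ge n$. In each case we check that
\[
|AB|\le \min\bigl(|G|,\ (a+c)n+b+d-1\bigr)=\min(|G|,\,r+s-1).
\]
In case (i) the full cosets number $a+c$, since indices run $0,\dots,a+c$ with the last coset partial. However, when $b>0$ the set $g^aA_0\cdot Ng^j$ for $j<c$ fills coset $a+j$, which is already covered. Also $g^iN\cdot B_0g^c$ fills coset $i+c\le a+c-1$ when $i<a$. So the count is $(a+c)n+|A_0B_0|$, as desired. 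If $a+c\ge p$, everything wraps around and the bound is $|G|$ trivially.

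\textbf{Main obstacle.} The delicate point is the exact alignment of cosets, which must be arranged so that the partial pieces only ever meet in a single coset. This is guaranteed by the lexicographic ordering together with $G/N$ cyclic. We also need care that the conjugation twist $^{g^a}$ is harmless, which holds because induction applies to arbitrary subsets of the prescribed sizes.
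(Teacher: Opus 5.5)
The paper does not prove this statement. It imports it from Eliahou--Kervaire and uses it only to produce the elementary trios with $a+b+c\le N+1$. Your argument is a correct, self-contained proof by the natural coset-stacking construction. You take full cosets along an initial interval of the cyclic quotient $G/N$ and place the two partial pieces at the top indices $a$ and $c$. Then, when $a+c<p$, every index $0,\dots,a+c-1$ is a full coset and $g^{a+c}N$ is reached only by $g^aA_0\cdot B_0g^c$. The citation is more economical and covers all solvable groups. Your argument makes the base of the recursion self-contained. It also never uses that $p$ is prime, so it shows directly that the property passes from $N$ to $G$ whenever $G/N$ is cyclic. For the metacyclic groups of this paper you can take $N=\langle\tau\rangle$ with intervals $A_0,B_0\subseteq N$, and one step suffices, with no induction and no appeal to solvability.

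Some bookkeeping should be tidied.
\begin{itemize}
\item Since $g^aA_0\cdot B_0g^c=g^aA_0B_0g^c$, no conjugation twist occurs in your normalization.
\item Your claim that $g^aA_0\cdot Ng^j$ falls in an already covered coset fails for $j=c-1$. The coset $g^{a+c-1}N$ is reached only by the mixed pairs $(a,c-1)$ and $(a-1,c)$, never by two full cosets. The count $(a+c)n+|A_0B_0|$ is nevertheless right.
\item Case (iii) is superfluous, since the inductive hypothesis already gives $|A_0B_0|\le b+d-1$ for all $1\le b,d\le n$.
\item The case you did not write out needs its own count. If $b=d=0$, the generic formula gives $(a+c)n$, which exceeds $r+s-1=(a+c)n-1$. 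You must use that the top index is then $a+c-2$, so $|AB|=\min(p,a+c-1)\,n\le r+s-1$. If exactly one of $b,d$ vanishes, then $|AB|\le(a+c)n\le r+s-1$, because the other remainder is at least $1$.
\end{itemize}
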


For positive integers $r,s,d$, set
\[
f_d(r,s)=d\left(\left\lceil\frac rd\right\rceil+
                 \left\lceil\frac sd\right\rceil-1\right).
\]
For a finite group $G$, write
\[
\Dset(G)=\{d:d\mid |G|\},\qquad
\Hset(G)=\{|H|:H\le G\},\qquad
\Nset(G)=\{|H|:H\triangleleft G\},
\]
and define
\[
\begin{aligned}
\DkappaG(r,s)&=\min_{d\in\Dset(G)}f_d(r,s),\\
\kappa_G(r,s)&=\min_{d\in\Hset(G)}f_d(r,s),\\
\NkappaG(r,s)&=\min_{d\in\Nset(G)}f_d(r,s).
\end{aligned}
\]

We use the following form of \cite[Theorem~6 and Section~5]{EliahouKervaire2010}.

\begin{theorem}[Eliahou--Kervaire]\label{thm:ek-bounds}
If $G$ is a finite solvable group and $1\le r,s\le |G|$, then
\[
\DkappaG(r,s)\le \mu_G(r,s)\le \NkappaG(r,s).
\]
\end{theorem}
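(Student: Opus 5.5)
This is Eliahou--Kervaire's theorem, which the paper quotes rather than proves. The plan is to recover both inequalities from earlier facts, treating the upper bound as a construction and the lower bound by induction on a subnormal series.

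\textbf{Upper bound.} Fix a normal subgroup $H\triangleleft G$ with $|H|=d$ attaining $\NkappaG(r,s)$. Put $a=\lceil r/d\rceil$ and $b=\lceil s/d\rceil$. The quotient $G/H$ is solvable, so Theorem~\ref{thm:ek-small} applied in $G/H$ gives subsets $\bar A,\bar B$ of sizes $a,b$ with $|\bar A\bar B|\le a+b-1$; here $a,b\le |G|/d$ because $r,s\le|G|$. Let $\pi\colon G\to G/H$ be the projection. Choose $A\subseteq\pi^{-1}(\bar A)$ with $|A|=r$ and $B\subseteq\pi^{-1}(\bar B)$ with $|B|=s$; this is possible since the preimages have $ad\ge r$ and $bd\ge s$ elements. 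Then $AB\subseteq\pi^{-1}(\bar A\bar B)$, which has size at most $d(a+b-1)=f_d(r,s)$. So $\mu_G(r,s)\le\NkappaG(r,s)$.

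\textbf{Lower bound.} This is the substantive direction. I would induct on $|G|$ using a normal subgroup $N\triangleleft G$ of prime index $p$, which exists because $G$ is solvable. Write $A=\bigcup_i A_i$ and $B=\bigcup_j B_j$, where the $A_i,B_j$ are the nonempty intersections with $N$-cosets. Products $A_iB_j$ lie in the coset $x_iy_jN$, and each is, up to translation, a product in $N$. By induction, each such product has size at least $\min_{d\mid |N|}f_d(|A_i|,|B_j|)$. The cosets met by $AB$ form a sumset in $\mathbb Z/p$, which Cauchy--Davenport controls. The aim is then to combine the fiber bounds and the quotient bound, Kneser style, and show that some $d\mid|G|$ (either $d\mid |N|$ or $pd$) satisfies $|AB|\ge f_d(r,s)$.

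\textbf{Main obstacle.} The difficulty is the combination step, since the fibers have unequal sizes and the optimal divisor $d$ may differ from fiber to fiber. Following Eliahou--Kervaire, I would first prove an abelian-style inequality for the function $\min_{d}f_d$: if the fiber sizes are sorted decreasingly and paired along a Cauchy--Davenport chain, then the sum of the fiber minima dominates $\min_{d\mid p|N|} f_d(r,s)$. This is essentially the known Dyson $e$-transform argument for cyclic $p$-extensions. I would invoke it as in \cite[Theorem~6]{EliahouKervaire2010} rather than rederive it.
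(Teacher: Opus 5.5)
Your upper bound is correct and is the standard argument. Pull back a product set of size at most $\lceil r/d\rceil+\lceil s/d\rceil-1$ from the solvable quotient $G/H$ via Theorem~\ref{thm:ek-small}, then trim the preimages; this gives $\mu_G(r,s)\le f_d(r,s)$ for every normal subgroup order $d$. The paper proves neither inequality; it imports both from \cite{EliahouKervaire2010}.

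The lower bound is where the gap lies. It rests on a combination lemma that is false as you state it. No bookkeeping with the numbers $\min_{d\mid |N|}f_d(|A_i|,|B_j|)$ alone, however the fibers are sorted or chained, can produce $\min_{d\mid p|N|}f_d(r,s)$. Take $|N|=6$ and $p=2$, for instance $G=C_3\rtimes C_4$. There $N\cong C_6$ is the only normal subgroup of prime index, so no other choice of $N$ can rescue the argument. Let $A$ have fibers of sizes $3$ and $2$ in the two $N$-cosets, and let $B=\{b_1,b_2\}\subseteq N$. Then $AB$ meets two $N$-cosets, each arising from exactly one pair of fibers. The inductive fiber bounds are $\min_{d\mid 6}f_d(3,2)=f_3(3,2)=3$ and $\min_{d\mid 6}f_d(2,2)=f_2(2,2)=2$, which total $5$. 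The target is $\min_{d\mid 12}f_d(5,2)=6$. The same arithmetic failure occurs for every prime index $p\ne 5$ over a kernel of order $6$.

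The true bound $|AB|\ge 6$ holds here for a structural reason that your relaxation discards. The two fiber minima are attained through different subgroups of $N$, and the single fiber $B$ cannot lie both in a coset of the order-$3$ subgroup and in a coset of the order-$2$ subgroup. More directly, $|AB|=|A|$ would force $Ab_1=Ab_2$. Then $A$ would be a union of left cosets of $\langle b_2b_1^{-1}\rangle$, a nontrivial subgroup whose order divides $5$, which is impossible in a group of order $12$.

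So the induction must carry structural information about the fiber products, such as their stabilizers or periods (a Kneser- or Kemperman-type statement), not just their sizes. Attributing the size-only lemma to Eliahou--Kervaire does not close this gap, since the lemma is false. Either supply such a structural argument or, as the paper does, cite the lower bound outright.
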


Since $\Nset(G)\subseteq\Hset(G)\subseteq\Dset(G)$,
\[
\DkappaG(r,s)\le\kappa_G(r,s)\le\NkappaG(r,s).
\]
For metacyclic groups, the divisor and subgroup-order minima coincide by Lemma~\ref{lem:divisors-subgroups}.

\subsection{Subset trios}

\begin{definition}
A \emph{subset trio} in a finite group $G$ is a triple $\Phi=(A,B,C)$ of subsets of $G$ such that $1\notin ABC$. Its cardinality vector is $(|A|,|B|,|C|)$, and its deficiency is
\[
\delta(\Phi)=|A|+|B|+|C|-|G|.
\]
The trio is called \emph{critical} if $\delta(\Phi)>0$, and \emph{nontrivial} if $A,B,C\ne\varnothing$.
\end{definition}

A subset trio is maximal if none of its components can be enlarged while preserving the condition $1\notin ABC$. Equivalently, maximality means that $C=G\setminus(AB)^{-1}$ and the two cyclically permuted identities hold. Every subset trio extends to a maximal one: among its extensions choose one maximizing the sum of the three cardinalities. This operation preserves nontriviality and can only increase deficiency.

Following DeVos \cite[Section~1.4]{DeVos2013}, two subset trios are similar if one is obtained from the other by a sequence of cyclic permutations, the operation
\[
(A,B,C)\longmapsto(C^{-1},B^{-1},A^{-1}),
\]
and the transformations
\[
(A,B,C)\longmapsto(Ag,g^{-1}B,C),\qquad
(A,B,C)\longmapsto(A,Bg,g^{-1}C),
\]
or
\[
(A,B,C)\longmapsto(g^{-1}A,B,Cg),
\]
for $g\in G$. These operations preserve maximality, criticality and deficiency, and preserve the cardinality vector up to permutation. In particular, the simultaneous change
\[
(A,B,C)\longmapsto(xAy^{-1},yBz^{-1},zCx^{-1})
\]
is a similarity.

\begin{lemma}\label{lem:elementary-trios}
Let $G$ be a finite metacyclic group of order $N$. If $a,b,c$ are positive integers satisfying
\[
a+b+c\le N+1,
\]
then there exists a subset trio in $G$ with cardinality vector $(a,b,c)$.
\end{lemma}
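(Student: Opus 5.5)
The plan is to deduce the statement from Theorem~\ref{thm:ek-small}, using the fact that finite metacyclic groups are solvable, together with the correspondence between trios and product sets. Since $G$ is metacyclic, it has a cyclic normal subgroup with cyclic quotient, so $G$ is solvable and Theorem~\ref{thm:ek-small} applies. Given positive integers $a,b,c$ with $a+b+c\le N+1$, we have $a+b-1\le N-c\le N-1$, so in particular $a,b\le N$ and Theorem~\ref{thm:ek-small} produces subsets $A,B\subseteq G$ with $|A|=a$, $|B|=b$ and $|AB|\le a+b-1$.

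Next we convert this pair into a trio. Set $C_0=G\setminus (AB)^{-1}$. Then $1\notin ABC_0$: indeed, if $xy z=1$ with $x\in A$, $y\in B$, $z\in C_0$, then $z^{-1}=xy\in AB$, so $z\in (AB)^{-1}$, which is a contradiction. Moreover
\[
|C_0|=N-|AB|\ge N-a-b+1\ge c .
\]
Choose any $C\subseteq C_0$ with $|C|=c$. The condition $1\notin ABC$ is inherited by subsets, so $(A,B,C)$ is a subset trio with cardinality vector $(a,b,c)$.

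The only points needing care are bookkeeping. First, we need the hypotheses $1\le a,b\le N$ of Theorem~\ref{thm:ek-small}, which follow from positivity and $a+b\le N+1-c\le N$. Second, we need $c\le |C_0|$, which is exactly where the assumption $a+b+c\le N+1$ enters. There is no real obstacle here; the substantive input is entirely the Eliahou--Kervaire construction of small product sets in solvable groups, together with the solvability of metacyclic groups.
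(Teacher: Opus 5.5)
Your proof is correct and follows the paper's argument exactly: solvability of metacyclic groups lets you invoke Theorem~\ref{thm:ek-small} to get $A,B$ with $|AB|\le a+b-1$, and then any $c$-element subset of $G\setminus(AB)^{-1}$ completes the trio. Your explicit check that $a,b\le N$, which uses $c\ge 1$, is a small bookkeeping detail that the paper leaves implicit.
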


\begin{proof}
A metacyclic group is solvable, since it has an abelian normal subgroup with abelian quotient. By Theorem~\ref{thm:ek-small}, choose $A,B$ of sizes $a,b$ with $|AB|\le a+b-1$. The complement $G\setminus(AB)^{-1}$ has at least $N-a-b+1\ge c$ elements. Any $c$-element subset $C$ of this complement gives the required trio.
\end{proof}

\begin{lemma}[Pair--trio bridge]\label{lem:pair-trio}
Let $A,B\subseteq G$ be nonempty and suppose $AB\ne G$. Set $C=G\setminus(AB)^{-1}$. Then $(A,B,C)$ is a nontrivial subset trio and
\[
\delta(A,B,C)=|A|+|B|-|AB|.
\]
Moreover, if $(A,B,C)$ is extended to a maximal subset trio $(A^+,B^+,C^+)$, then $A^+B^+=AB$ and $C^+=C$.
\end{lemma}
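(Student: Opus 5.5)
The plan is to verify the three claims of Lemma~\ref{lem:pair-trio} in order, working directly from the definitions of subset trio and maximality.

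\emph{Trio and nontriviality.} We need $1\notin ABC$. Suppose $abc=1$. Then $c=(ab)^{-1}\in(AB)^{-1}$, which contradicts $c\in C=G\setminus(AB)^{-1}$. So $(A,B,C)$ is a subset trio. The sets $A$ and $B$ are nonempty by hypothesis. Since inversion is a bijection, $|(AB)^{-1}|=|AB|<|G|$, so $C\neq\varnothing$. Hence the trio is nontrivial, and in addition $|C|=|G|-|AB|$.

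\emph{Deficiency.} Substituting $|C|=|G|-|AB|$ into the definition gives
\[
\delta(A,B,C)=|A|+|B|+|G|-|AB|-|G|=|A|+|B|-|AB|.
\]

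\emph{Maximal extensions.} Let $(A^+,B^+,C^+)$ be a maximal trio with $A\subseteq A^+$, $B\subseteq B^+$ and $C\subseteq C^+$. The condition $1\notin A^+B^+C^+$ is equivalent to $C^+\cap(A^+B^+)^{-1}=\varnothing$. We argue through two inclusion chains, and this double inclusion is the only step needing care.

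For the first chain, $AB\subseteq A^+B^+$, so $(AB)^{-1}\subseteq(A^+B^+)^{-1}$. The extended trio satisfies $C^+\subseteq G\setminus(A^+B^+)^{-1}$, and therefore
\[
C^+\subseteq G\setminus(A^+B^+)^{-1}\subseteq G\setminus(AB)^{-1}=C\subseteq C^+.
\]
All of these sets are equal. In particular $C^+=C$ and $(A^+B^+)^{-1}=(AB)^{-1}$, and inverting the latter gives $A^+B^+=AB$.

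The maximality characterization $C^+=G\setminus(A^+B^+)^{-1}$ is consistent with this but is not needed, since only the trio condition on the extension was used. I expect no genuine obstacle: the argument is a direct unwinding of definitions.
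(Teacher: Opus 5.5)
Your proof is correct and uses the same inclusion argument as the paper. The paper derives $A^+B^+\subseteq G\setminus C^{-1}=AB$ from product-freeness and then invokes maximality to get $C^+=G\setminus(A^+B^+)^{-1}=C$; your closed chain $C^+\subseteq G\setminus(A^+B^+)^{-1}\subseteq C\subseteq C^+$ gives both equalities at once, shows that maximality is not needed, and also checks nontriviality ($C\neq\varnothing$ because $|AB|<|G|$), which the paper leaves implicit.
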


\begin{proof}
By construction, $C^{-1}=G\setminus AB$, so $AB\cap C^{-1}=\varnothing$ and
\[
\delta(A,B,C)=|A|+|B|-|AB|.
\]
If $(A^+,B^+,C^+)$ is a maximal extension, then $C^+\supseteq C$. Product-freeness therefore gives
\[
A^+B^+\subseteq G\setminus C^{-1}=AB.
\]
The reverse inclusion follows from $A\subseteq A^+$ and $B\subseteq B^+$. Hence $A^+B^+=AB$, and maximality gives $C^+=G\setminus(A^+B^+)^{-1}=C$.
\end{proof}

\begin{lemma}\label{lem:symmetry}
The set of cardinality vectors of subset trios in $G$ is invariant under permutations of the three coordinates and under coordinatewise decrease.
\end{lemma}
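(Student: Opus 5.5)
The plan is to prove the two invariances separately, in each case producing an explicit new subset trio from a given one.

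\emph{Invariance under coordinatewise decrease.} Suppose $(A,B,C)$ is a subset trio and $a'\le|A|$, $b'\le|B|$, $c'\le|C|$. Choose subsets $A'\subseteq A$, $B'\subseteq B$, $C'\subseteq C$ of these sizes. Then $A'B'C'\subseteq ABC$, so $1\notin A'B'C'$. Thus $(A',B',C')$ is a subset trio with the required cardinality vector. Empty components are allowed, so decreasing a coordinate to $0$ causes no difficulty.

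\emph{Invariance under permutations.} The symmetric group on three coordinates is generated by a $3$-cycle and a transposition, so it suffices to realize each of these by an explicit operation on trios.

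For the $3$-cycle, use the cyclic permutation $(A,B,C)\mapsto(B,C,A)$. If $1=bca$ with $a\in A$, $b\in B$, $c\in C$, then conjugating by $a$ gives $a\,bca\,a^{-1}=abc=1$, so $1\in ABC$. Contrapositively, $1\notin ABC$ implies $1\notin BCA$. This map sends the vector $(|A|,|B|,|C|)$ to $(|B|,|C|,|A|)$.

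For the transposition, use the reversal $(A,B,C)\mapsto(C^{-1},B^{-1},A^{-1})$ from the list of similarities. If $1=c^{-1}b^{-1}a^{-1}$, then inverting gives $abc=1$, a contradiction. So the image is again a subset trio, and since inversion is a bijection it has cardinality vector $(|C|,|B|,|A|)$, which swaps the first and third coordinates.

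The transposition $(1\,3)$ and the $3$-cycle together generate all of $S_3$, which gives invariance under every permutation of the coordinates. Alternatively, this follows directly from the earlier remark that similarities preserve the cardinality vector up to permutation, once one checks that the two operations above realize generators of $S_3$. There is no real obstacle here; the only point needing care is that the reversal gives the transposition of the first and third coordinates rather than some other permutation.
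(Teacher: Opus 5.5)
Your proof is correct and follows the paper's argument: the cyclic rotation and the reversal $(A,B,C)\mapsto(C^{-1},B^{-1},A^{-1})$ generate all coordinate permutations, and passing to subsets handles coordinatewise decrease. You also spell out the checks the paper leaves implicit, namely the conjugation by $a$ for the rotation and the inversion of $abc$ for the reversal.
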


\begin{proof}
Cyclic permutation preserves product-freeness. Moreover,
\[
(A,B,C)\longmapsto(C^{-1},B^{-1},A^{-1})
\]
preserves product-freeness and interchanges the first and third coordinates. These operations generate all permutations of the three coordinates. Finally, replacing any component by a subset preserves product-freeness.
\end{proof}

\subsection{Finite metacyclic groups}

\begin{definition}
A finite group $G$ is called \emph{metacyclic} if it contains a cyclic normal subgroup $N\triangleleft G$ such that $G/N$ is cyclic.
\end{definition}

We use the H\"older presentation in Yang's convention \cite[Section~1]{Yang2020}: the cyclic kernel has order $n$, and the cyclic quotient has order $m$.

\begin{theorem}[Yang]\label{thm:yang-presentation}
Let $G$ be a finite metacyclic group. Then there exist positive integers $m,n$ and integers $g,h$, with
\[
0\le g,h<n,\qquad n\mid g(h-1),\qquad n\mid(h^m-1),
\]
such that
\[
G\cong K(m,n,g,h)=
\angles{\tau,\eta\mid \tau^n=1,\ \eta^m=\tau^g,\ \eta\tau\eta^{-1}=\tau^h}.
\]
Moreover, $|G|=mn$, $\angles{\tau}\triangleleft G$ has order $n$, and $G/\angles{\tau}$ is cyclic of order $m$. Every element of $G$ has a unique expression
\[
\tau^i\eta^j,\qquad 0\le i<n,\quad 0\le j<m.
\]
\end{theorem}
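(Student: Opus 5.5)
The plan is the classical Hölder argument, organized to land in Yang's normalization. Since $G$ is metacyclic, fix a cyclic normal subgroup $N=\langle\tau\rangle\triangleleft G$ with $G/N$ cyclic. Put $n=|N|$ and $m=|G/N|$, and choose $\eta\in G$ whose coset $\eta N$ generates $G/N$. The relations will be read off from the normality of $N$ and from the fact that $G/N$ is cyclic. Uniqueness of normal forms and the order count will come from the coset decomposition of $G$ with respect to $N$.

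\textbf{Step 1: the relations.} Because $N\triangleleft G$ is cyclic, conjugation by $\eta$ restricts to an automorphism of $N$. Hence $\eta\tau\eta^{-1}=\tau^h$ for some $h$, which we reduce mod $n$ to get $0\le h<n$, and $h$ is a unit mod $n$. Since $\eta^m N=N$, we have $\eta^m\in N$, so $\eta^m=\tau^g$ for some $0\le g<n$.

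\textbf{Step 2: the arithmetic conditions.} Conjugation by $\eta^m$ acts on $\tau$ as $\tau\mapsto\tau^{h^m}$. But $\eta^m=\tau^g$ lies in the abelian group $N$, so it centralizes $\tau$. This gives $\tau^{h^m}=\tau$, that is, $n\mid h^m-1$. Similarly, $\eta$ commutes with $\eta^m=\tau^g$, so
\[
\tau^g=\eta\tau^g\eta^{-1}=\tau^{gh},
\]
which gives $n\mid g(h-1)$.

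\textbf{Step 3: normal forms and order.} The cosets $\eta^jN$ for $0\le j<m$ are exactly the cosets of $N$, and $N=\{\tau^i:0\le i<n\}$ with these $\tau^i$ distinct. Therefore every element is uniquely $\tau^i\eta^j$ with $0\le i<n$ and $0\le j<m$, and $|G|=mn$. Note that the order of the factors, $\tau^i\eta^j$ versus $\eta^j\tau^i$, is immaterial because $N$ is normal.

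\textbf{Step 4: the isomorphism.} Consider the abstract group $K=K(m,n,g,h)$. Steps 1 and 2 show that the generators $\tau,\eta$ of $G$ satisfy its relations, so there is a surjection $K\to G$. Hence $|K|\ge mn$. For the reverse inequality, $\langle\tau\rangle$ is normal in $K$, since the relation $\eta\tau\eta^{-1}=\tau^h$ together with $h$ invertible mod $n$ (which follows from $n\mid h^m-1$) also gives $\eta^{-1}\tau\eta\in\langle\tau\rangle$. Moreover, $\langle\tau\rangle$ has order at most $n$, and $\eta^m\in\langle\tau\rangle$. So every element of $K$ is of the form $\tau^i\eta^j$ with $0\le i<n$, $0\le j<m$, and $|K|\le mn$. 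The surjection is therefore a bijection, giving $G\cong K$ with the stated properties.

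I do not expect a serious obstacle. The one point needing care is the upper bound $|K|\le mn$ in Step 4, specifically checking that $\langle\tau\rangle$ is normal in the abstractly presented group; this is what the invertibility of $h$ mod $n$ handles. Alternatively, the statement may simply be cited from Yang.
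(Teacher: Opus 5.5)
Your proof is correct. It is the classical H\"older derivation, and you handle the one delicate point properly: in the abstract group $K(m,n,g,h)$, normality of $\langle\tau\rangle$ must be deduced from $n\mid h^m-1$ (so $h$ is a unit modulo $n$), not inherited from $G$.

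The paper does not prove this statement; it quotes it from Yang's Section~1, where H\"older's presentation is recalled. Your argument therefore makes explicit what that citation supplies. Your Step~4 compares a surjection from the presentation with the normal-form bound $|K|\le mn$, which is the same comparison the paper uses in proving Theorem~\ref{thm:yang-subgroups}.

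The citation carries one thing your argument does not: the converse direction. The statement does not assert it, but the paper later relies on it, for instance when it infers $|K(9,57,19,4)|=513$ from the two congruences alone. For an arbitrary admissible tuple your Step~4 only gives $|K(m,n,g,h)|\le mn$. Equality requires actually constructing a group of order $mn$, e.g.\ by checking that the rule \eqref{eq:holder-multiplication} on $\Z/n\Z\times\Z/m\Z$ is associative. That check is exactly where $h^m\equiv1$ and $g(h-1)\equiv0\pmod n$ are used.
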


We next record the part of Yang's subgroup parametrization that will be needed in the recursive construction.

\begin{definition}\label{def:gamma-K}
Let $K=K(m,n,g,h)$. Define $\Gamma_K$ to be the set of triples $(k,\ell,\beta)$ satisfying
\[
\ell\mid n,\qquad \ell\mid k,\qquad k\mid m\ell,
\qquad 0\le\beta<\frac n\ell,
\]
and
\[
\beta\sum_{j=0}^{k/\ell-1}h^{(m\ell/k)j}
\equiv -g\pmod{n/\ell}.
\]
For $(k,\ell,\beta)\in\Gamma_K$, set
\[
\Psi(k,\ell,\beta)=
\angles{\tau^\beta\eta^{m\ell/k},\tau^{n/\ell}}.
\]
\end{definition}

\begin{theorem}[Yang \cite{Yang2020}]\label{thm:yang-subgroups}
Let $K=K(m,n,g,h)$. The map
\[
\Psi:\Gamma_K\longrightarrow\{H:H\le K\}
\]
is a bijection. Moreover, if
\[
H=\Psi(k,\ell,\beta),
\]
then
\[
|H|=k,\qquad |H\cap\angles{\tau}|=\ell.
\]
Set
\[
q=\frac{k}{\ell},\qquad
D=\frac{n}{\ell},\qquad
t=\frac{m\ell}{k},\qquad
S_t=\sum_{j=0}^{q-1}h^{tj}.
\]
Then
\[
H\cong K\left(q,\ell,
\frac{g+\beta S_t}{D}\bmod\ell,\ h^t\bmod\ell\right).
\]
Furthermore, $H\triangleleft K$ if and only if
\[
\beta(h-1)\equiv0\pmod D,
\qquad
h^t\equiv1\pmod D.
\]
In this case,
\[
K/H\cong K(t,D,-\beta\bmod D,h\bmod D).
\]
\end{theorem}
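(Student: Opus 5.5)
This theorem is quoted from Yang \cite{Yang2020}, so in the paper it is invoked rather than reproved. If I had to check it myself, I would verify each assertion directly from the presentation in Theorem~\ref{thm:yang-presentation} and the normal form $\tau^i\eta^j$.

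\emph{Classifying subgroups.} Let $H\le K$ and let $\pi:K\to K/\angles{\tau}\cong C_m$ be the quotient map. Then $H\cap\angles{\tau}$ is a subgroup of a cyclic group of order $n$. It therefore equals $\angles{\tau^{n/\ell}}$ for a unique divisor $\ell\mid n$.

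The image $\pi(H)$ is cyclic, of order $q=k/\ell$ dividing $m$. Hence $\pi(H)=\angles{\eta^{t}\angles{\tau}}$ with $t=m/q=m\ell/k$, which gives $k\mid m\ell$ and $\ell\mid k$. Choose $x\in H$ with $\pi(x)=\eta^t\angles{\tau}$, say $x=\tau^{\beta'}\eta^t$. The exponent $\beta'$ is determined modulo $n/\ell$, because $x$ may be multiplied by elements of $H\cap\angles{\tau}$. So there is a unique representative $0\le\beta<n/\ell$, and $H=\angles{\tau^\beta\eta^t,\tau^{n/\ell}}$.

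\emph{The congruence.} Using $\eta\tau=\tau^h\eta$, I would compute
\[
(\tau^\beta\eta^t)^q=\tau^{\beta S_t}\eta^{tq}=\tau^{\beta S_t}\eta^m=\tau^{\beta S_t+g}.
\]
This element lies in $\angles{\tau}$, so it must lie in $H\cap\angles{\tau}=\angles{\tau^{n/\ell}}$. That is exactly the condition $\beta S_t\equiv -g\pmod{n/\ell}$.

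Conversely, suppose the congruence holds. Conjugation by $\eta^t$ preserves $\angles{\tau^{n/\ell}}$, because it is characteristic in $\angles{\tau}$. So $H=\angles{\tau^{n/\ell}}\angles{x}$ is a subgroup. Its intersection with $\angles{\tau}$ is $\angles{\tau^{n/\ell}}$, since every element of $H$ has the form $\tau^{(n/\ell)a}x^b$ and $x^b\in\angles{\tau}$ forces $q\mid b$. Hence $|H|=\ell q=k$. The three steps together give the bijection $\Psi$.

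\emph{Isomorphism type of $H$.} Take $\tau'=\tau^{n/\ell}$ and $\eta'=x$. Then $\tau'$ has order $\ell$, and $\eta'\tau'\eta'^{-1}=\tau'^{h^t}$. From the computation above, $\eta'^q=\tau^{g+\beta S_t}=\tau'^{(g+\beta S_t)/D}$. This is the presentation $K(q,\ell,\cdot,\cdot)$ stated in the theorem.

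\emph{Normality.} It suffices to conjugate the generator $x$ by $\tau$ and by $\eta$.
\begin{itemize}
\item Conjugating by $\tau$: $\tau x\tau^{-1}=\tau^{1-h^t}x$. This lies in $H$ if and only if $h^t\equiv1\pmod D$.
\item Conjugating by $\eta$: $\eta x\eta^{-1}=\tau^{h\beta}\eta^t=\tau^{(h-1)\beta}x$. This lies in $H$ if and only if $\beta(h-1)\equiv0\pmod D$.
\end{itemize}

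\emph{The quotient.} When $H$ is normal, $K/H$ is generated by the images of $\tau$ and $\eta$. The image of $\tau$ has order $D$, and the image of $\eta$ has order $t$ modulo $\langle\bar\tau\rangle$. The relation $\eta^t\equiv\tau^{-\beta}$ holds in $K/H$, and conjugation still acts by $h$. Counting, $|K/H|=tD$, so this gives the presentation $K(t,D,-\beta,h)$.

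\emph{Main obstacle.} I expect the delicate points to be the bookkeeping of exponents modulo $D$ versus modulo $\ell$, and the well-definedness of $(g+\beta S_t)/D$ as an integer. For the latter, the congruence gives $D\mid g+\beta S_t$ directly.
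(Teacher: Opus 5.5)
Your argument is correct, but it takes a different route from the paper on the first half of the statement. The paper's proof cites Yang (Lemma~3.2 and Theorems~3.2--3.3) for the bijection $\Psi$, for $|H|=k$ and $|H\cap\angles{\tau}|=\ell$, and for the normality criterion. It verifies only the two presentation formulas directly, using exactly your generators $\tau^\beta\eta^t$ and $\tau^{D}$, and the images of $\tau,\eta$ in $K/H$. You instead derive the parametrization yourself from $H\cap\angles{\tau}\le H$ and $\pi(H)\le C_m$. The congruence defining $\Gamma_K$ appears as the condition $x^q=\tau^{g+\beta S_t}\in\angles{\tau^{D}}$. The normality criterion comes from $\tau x\tau^{-1}=\tau^{1-h^t}x$ and $\eta x\eta^{-1}=\tau^{(h-1)\beta}x$. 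Your route buys a self-contained proof and checks that the quoted statement matches the paper's conventions ($\eta\tau\eta^{-1}=\tau^h$ and the sum $S_t$). It also shows that the two normality congruences are exactly the H\"older consistency conditions $D\mid(-\beta)(h-1)$ and $D\mid h^t-1$ for $K(t,D,-\beta,h)$. The citation buys brevity.

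Two steps need one more line each. In the converse direction, after $x^b\in\angles{\tau}$ forces $q\mid b$, you must still note that $x^b=(x^q)^{b/q}\in\angles{\tau^{D}}$. This is the step where the congruence is actually used to get $|H\cap\angles{\tau}|=\ell$. For $H\cong K(q,\ell,\ldots)$, you should say why the presented group has at most $q\ell$ elements. In that group $\angles{\tau'}$ is normal with cyclic quotient of order dividing $q$. It surjects onto $H$, and $|H|=q\ell$, so the surjection is an isomorphism. This is the count the paper states, and you already gave the analogous count for $K/H$.
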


\begin{proof}
The parametrization, orders and normality criterion are Yang's Lemma~3.2 and Theorems~3.2--3.3. For the presentation formulas, put $u=\tau^\beta\eta^t$ and $v=\tau^D$. Then $v$ has order $\ell$, $uvu^{-1}=v^{h^t}$ and
\[
u^q=\tau^{g+\beta S_t}=v^{(g+\beta S_t)/D}.
\]
The resulting presentation has order $q\ell=|H|$, so its natural map onto $H$ is an isomorphism. If $H$ is normal, the images of $\tau,\eta$ in $K/H$ satisfy $\tau^D=1$ and $\eta^t=\tau^{-\beta}$, with the original action. Their presentation has order $tD=|K/H|$, proving the quotient formula. Reducing $g+\beta S_t$ modulo $n$ before dividing by $D$ gives the same parameter modulo $\ell$.
\end{proof}

\begin{lemma}\label{lem:divisors-subgroups}
Every divisor of the order of a finite metacyclic group $G$ is a subgroup order. Consequently,
\[
\DkappaG(r,s)=\kappa_G(r,s)
\]
for all valid $r,s$.
\end{lemma}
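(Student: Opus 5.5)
The plan is to argue by induction on $|G|$, using the extension $1\to N\to G\to G/N\to 1$ with $N=\angles{\tau}$ cyclic of order $n$ and $G/N$ cyclic of order $m$. Fix a divisor $d$ of $|G|=mn$. Write $d=\ell q$ with $\ell=\gcd(d,n)$, so that $\ell\mid n$ and $q=d/\ell$ divides $m\cdot\frac{n}{\ell}$. A more convenient choice is to aim directly for a subgroup $H$ with $|H\cap N|=\ell$ and $HN/N$ of order $q$. This requires $q\mid m$, which does not follow from the choice $\ell=\gcd(d,n)$ alone. So I will instead choose $\ell$ so that $\ell\mid n$, $q=d/\ell\mid m$, and a subgroup of the required shape exists.

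The cleaner route is induction. The key steps, in order:
\begin{enumerate}
\item If $\gcd(d,m)$ absorbs enough of $d$, pass to the quotient: the group $G/N'$ for a characteristic subgroup $N'\le N$ is again metacyclic and smaller. Subgroups of $G/N'$ pull back to subgroups of $G$ containing $N'$, and their orders are multiplied by $|N'|$.
\item Otherwise, if $p\mid d$ with $p\mid n$, let $N_p\le N$ be the unique subgroup of order $p$. It is characteristic in $N$, hence normal in $G$. Then $G/N_p$ is metacyclic of order $mn/p$, generated by the images of $\tau$ and $\eta$. By induction it has a subgroup of order $d/p$, whose preimage has order $d$.
\item If $d$ is coprime to $n$, then $d\mid m$. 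Take $x=\eta^{m/d}$, whose image in $G/N$ has order $d$. Then $\angles{x}$ has order $d\cdot e$ for some $e\mid n$, since $x^d\in N$.
\end{enumerate}

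The subgroup $\angles{x}$ is cyclic of order $de$ with $\gcd(d,e)=1$ (because $e\mid n$ and $\gcd(d,n)=1$). So it contains a subgroup of order exactly $d$, namely $\angles{x^e}$. This settles the third case.

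Combining these, induction on $|G|$ gives a subgroup of every order $d\mid |G|$: if some prime divisor of $d$ divides $n$, use step 2; otherwise use step 3. The base case $|G|=1$ is trivial. The only points to check are that quotients of metacyclic groups by normal subgroups contained in $N$ are metacyclic, with cyclic kernel $N/N_p$ and cyclic quotient $G/N$, and that the order arithmetic works. For step 2 one should also allow $n=1$; then $G$ is cyclic and the claim is immediate.

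Alternatively one could read it off Theorem~\ref{thm:yang-subgroups} by exhibiting $(k,\ell,\beta)\in\Gamma_K$ with $k=d$. However, solving the congruence for $\beta$ is the likely obstacle there, so I prefer the inductive argument above.

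Consequently $\Dset(G)=\Hset(G)$, so the two minima defining $\DkappaG$ and $\kappa_G$ range over the same index set and coincide.
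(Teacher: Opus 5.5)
Your argument is correct, and it takes a genuinely different route from the paper. The paper does not induct. It writes $G=K(m,n,g,h)$, $\ell=\gcd(k,n)$, $q=k/\ell$ and $D=n/\ell$, and notes that $q\mid m$ and $\gcd(q,D)=1$. It then exhibits an explicit triple $(k,\ell,\beta)\in\Gamma_K$ by taking $\beta\equiv -q^{-1}g\pmod D$ (or $\beta=0$ if $D=1$). The defining congruence holds because $g(h-1)\equiv0\pmod n$ gives $gS_t\equiv qg\pmod D$.

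So the two points you flagged as obstacles are not real ones. First, $q\mid m$ does follow from $\ell=\gcd(d,n)$, since $\gcd(d/\ell,n/\ell)=1$ and $q\mid m\cdot n/\ell$; your aside claiming otherwise is mistaken, though harmless because you never use it. Second, $\beta$ is found in one line.

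Your proof instead uses only the definition of metacyclicity. The order-$p$ subgroup of the cyclic kernel is characteristic there, hence normal in $G$, and the quotient is again metacyclic. When $\gcd(d,n)=1$ one has $d\mid m$, and the cyclic group $\angles{\eta^{m/d}}$ of order $de$ contains a subgroup of order $d$. This makes your argument self-contained and independent of Yang's parametrization and of the H\"older normal form. The paper's route, relying on Yang's theorem, instead produces the subgroup directly as $\Psi(k,\ell,\beta)$ with $|H\cap\angles{\tau}|=\gcd(k,n)$, which is the form in which subgroups enter the recursion.

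Two minor remarks. Your step 1 is vague and superfluous, since the dichotomy between steps 2 and 3 already covers every $d$. In step 3 the coprimality $\gcd(d,e)=1$ is not needed, because a cyclic group of order $de$ always has a subgroup of order $d$.
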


\begin{proof}
Write $G=K(m,n,g,h)$ and let $k\mid mn$. Put $\ell=\gcd(k,n)$, $q=k/\ell$ and $D=n/\ell$. Comparing prime valuations shows that $q\mid m$ and $\gcd(q,D)=1$. With $t=m/q$ and $S_t=\sum_{j=0}^{q-1}h^{tj}$, the consistency relation $g(h-1)\equiv0\pmod n$ gives $gS_t\equiv qg\pmod D$. If $D>1$, choose $\beta\equiv-q^{-1}g\pmod D$; then $\beta S_t\equiv-g\pmod D$. If $D=1$, take $\beta=0$. Thus $(k,\ell,\beta)$ satisfies Definition~\ref{def:gamma-K}, and Yang's subgroup $\Psi(k,\ell,\beta)$ has order $k$. The equality of the two minima follows.
\end{proof}

\begin{proposition}\label{prop:closure}
Every subgroup and every quotient of a finite metacyclic group is metacyclic.
\end{proposition}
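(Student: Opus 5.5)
The plan is to argue directly from the definition of metacyclic, rather than through Yang's parametrization, since the definition is visibly stable under both operations. Fix $G$ metacyclic with a cyclic normal subgroup $N\triangleleft G$ such that $G/N$ is cyclic. Recall two facts: every subgroup of a cyclic group is cyclic, and every quotient of a cyclic group is cyclic.

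\emph{Subgroups.} Let $H\le G$ and put $M=H\cap N$.
\begin{enumerate}
\item $M$ is a subgroup of the cyclic group $N$, hence cyclic.
\item $M$ is normal in $H$: for $x\in H$ we have $xMx^{-1}\subseteq H\cap xNx^{-1}=H\cap N=M$, because $N\triangleleft G$.
\item By the second isomorphism theorem, $H/M=H/(H\cap N)\cong HN/N$.
\item $HN/N$ is a subgroup of the cyclic group $G/N$, hence cyclic.
\end{enumerate}
Thus $M$ is a cyclic normal subgroup of $H$ with cyclic quotient, so $H$ is metacyclic.

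\emph{Quotients.} Let $K\triangleleft G$ and write $\pi\colon G\to G/K$ for the projection.
\begin{enumerate}
\item $\pi(N)=NK/K$ is the homomorphic image of the cyclic group $N$, hence cyclic.
\item $\pi(N)$ is normal in $G/K$, since $N$ is normal in $G$ and $\pi$ is surjective.
\item By the third isomorphism theorem, $(G/K)/(NK/K)\cong G/NK$.
\item $G/NK$ is a quotient of $G/N$, since $N\le NK$, and is therefore cyclic.
\end{enumerate}
So $G/K$ is metacyclic.

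Nothing here is a real obstacle; the only point to state with care is the normality of $H\cap N$ in $H$ in the subgroup case. As a cross-check, the argument agrees with Theorem~\ref{thm:yang-subgroups}, which gives explicit presentations $K(q,\ell,\dots)$ for subgroups and $K(t,D,\dots)$ for quotients. One could also prove the proposition from those presentations via Theorem~\ref{thm:yang-presentation}, but that would require checking the consistency conditions. The structural argument above avoids that check and is the one I would present.
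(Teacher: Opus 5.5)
Your proposal is correct and follows the same route as the paper. For subgroups you use $H\cap N$ with $H/(H\cap N)\cong HN/N$, and for quotients you use $NK/K$ with $(G/K)/(NK/K)\cong G/NK$; you simply spell out the normality checks that the paper leaves implicit.
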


\begin{proof}
Let $G$ be metacyclic and choose a cyclic normal subgroup $N\triangleleft G$ such that $G/N$ is cyclic. If $H\le G$, then $H\cap N$ is cyclic and normal in $H$, while
\[
H/(H\cap N)\cong HN/N
\]
is a subgroup of the cyclic group $G/N$. Hence $H$ is metacyclic. If $L\triangleleft G$, then $NL/L$ is cyclic and normal in $G/L$, and
\[
(G/L)/(NL/L)\cong G/NL
\]
is a quotient of the cyclic group $G/N$. Hence $G/L$ is metacyclic.
\end{proof}

\subsection{Critical subset trios}

For a subset $A\subseteq G$, write
\[
\StabL(A)=\{g\in G:gA=A\},
\qquad
\StabR(A)=\{g\in G:Ag=A\}.
\]
The proof of the main theorem uses the classification of maximal critical subset trios obtained by DeVos \cite[Theorem~2.3]{DeVos2013}. We first record an elementary restriction that applies to the metacyclic setting.

\begin{lemma}\label{lem:cyclic-commutator}
Let $G$ be a finite metacyclic group. Then $G'$ is cyclic. In particular, every section of $G$ has cyclic commutator subgroup.
\end{lemma}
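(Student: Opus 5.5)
The plan is to use the defining normal subgroup directly. Choose a cyclic normal subgroup $N\triangleleft G$ with $G/N$ cyclic. Since $G/N$ is abelian, the universal property of the commutator subgroup gives $G'\le N$. A subgroup of a cyclic group is cyclic, so $G'$ is cyclic. This is the whole first assertion, and it requires no use of the presentation in Theorem~\ref{thm:yang-presentation}.

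For the second assertion, a section of $G$ means a quotient $H/L$ with $L\triangleleft H\le G$. Proposition~\ref{prop:closure} shows that $H$ is metacyclic, and applying it again shows that $H/L$ is metacyclic. The first part, applied to the metacyclic group $H/L$, then gives that $(H/L)'$ is cyclic.

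As an alternative check that avoids invoking the closure proposition twice, note that $(H/L)'=H'L/L$ is a quotient of $H'$, and $H'\le G'$. Subgroups and quotients of cyclic groups are cyclic, so $(H/L)'$ is cyclic.

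Neither step is a real obstacle. The only care needed is to state precisely what ``section'' means, and to justify the identity $(H/L)'=H'L/L$ if the second route is used; this identity holds because commutators of cosets are cosets of commutators.
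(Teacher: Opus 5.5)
Your proposal is correct and follows the paper's proof: you take $G'\le N$ because $G/N$ is abelian, and you use Proposition~\ref{prop:closure} to pass to sections. Your alternative route via $(H/L)'=H'L/L$, a quotient of $H'\le G'$, is also valid and has the small advantage of not needing the closure proposition at all.
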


\begin{proof}
Let $N\triangleleft G$ be cyclic with $G/N$ cyclic. Since $G/N$ is abelian, $G'\le N$. Hence $G'$ is cyclic. By Proposition~\ref{prop:closure}, every subgroup and every quotient of $G$ is metacyclic, so the same argument applies to every section of $G$.
\end{proof}

The following is the specialization of DeVos's classification that will be used throughout the proof.

\begin{proposition}[Metacyclic specialization of DeVos's classification]\label{prop:devos-specialization}
Let $\Phi$ be a nontrivial maximal critical subset trio in a finite metacyclic group $G$. Then there exist subgroups
\[
G=G_1>G_2>\cdots>G_m
\]
and subset trios $\Phi=\Phi_1,\Phi_2,\ldots,\Phi_m$ such that:
\begin{enumerate}
\item $\Phi_i$ is a subset trio in $G_i$ for $1\le i\le m$;
\item for $1\le i<m$, the trio $\Phi_i$ is either an impure beat, an impure cyclic chord, or an impure dihedral chord of type $1$ or $2A$, with continuation $\Phi_{i+1}$;
\item the terminal trio $\Phi_m$ is either a pure beat, a pure cyclic chord, a pure dihedral chord, an impure dihedral chord of type $0$ or $2B$, or, after replacing $\Phi_m$ by a similar trio $(A,B,C)$, there exist a possibly trivial normal subgroup $H\triangleleft G_m$ and subgroups $H_1,H_2,H_3\le G_m$ with $H\le H_i$ and $H\triangleleft H_i$ for $i=1,2,3$ such that
\[
\begin{aligned}
H_1&=\StabL(A)=\StabR(C),\\
H_2&=\StabR(A)=\StabL(B),\\
H_3&=\StabR(B)=\StabL(C).
\end{aligned}
\]
and one of the following holds:
\begin{enumerate}
\item[(a)]
\[
|A|=2|H_2|=|B|,
\qquad
\delta(A,B,C)=|H_1|=|H_3|<|H_2|;
\]
\item[(b)]
\[
9|H_1|=|A|=3|H_2|,\qquad
2|H_2|=|B|=3|H_3|,
\]
\[
\delta(A,B,C)=|H_1|,
\qquad
[H_1:H]\mid16;
\]
\item[(c)]
\[
4|H_1|=|A|=2|H_2|,\qquad
3|H_2|=|B|=2|H_3|,
\]
\[
\delta(A,B,C)=|H_1|,
\qquad
[H_1:H]\mid16.
\]
\end{enumerate}
\end{enumerate}
\end{proposition}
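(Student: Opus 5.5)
The plan is to take DeVos's classification \cite[Theorem~2.3]{DeVos2013} as given and check that every outcome excluded from the statement cannot occur in a finite metacyclic group. DeVos's theorem says that every nontrivial maximal critical subset trio $\Phi$ in an arbitrary finite group admits a chain $G=G_1>G_2>\cdots>G_m$ and trios $\Phi_1=\Phi,\ldots,\Phi_m$ such that each nonterminal $\Phi_i$ is an impure beat, impure cyclic chord, or impure dihedral chord (types $1$ or $2A$) with continuation $\Phi_{i+1}$ in the proper subgroup $G_{i+1}$. The terminal trio is then either pure, or an impure dihedral chord of type $0$ or $2B$, or one of DeVos's ``sporadic'' configurations. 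Up to similarity, those sporadic configurations are built from stabilizer data $H\triangleleft H_i$, $H_1=\StabL(A)=\StabR(C)$, and so on. Items 1 and 2 of the proposition are therefore a verbatim transcription of DeVos's recursive clause, with the sections $G_i$ being subgroups of $G$. Nothing needs proving there beyond noting that the continuation trio lives in $G_{i+1}$.

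The content of the specialization is item 3. DeVos's terminal list contains, besides (a), (b), (c), further exceptional families, and we must show these cannot arise. As I recall them, these are configurations whose quotient $H_i/H$ (or $G_m/H$) is forced to contain a nonabelian section with noncyclic commutator subgroup, such as a quaternion-type or $\mathrm{SL}_2(\mathbb F_3)$-type section, or an $A_4$-type section whose commutator subgroup is $C_2\times C_2$. The key tool is Lemma~\ref{lem:cyclic-commutator}: every section of $G$, in particular $G_m/H$ and each $H_i/H$, has cyclic commutator subgroup. For each excluded family, the step is to read off from DeVos's description the group that the stabilizer quotients must contain and exhibit a section with noncyclic derived subgroup. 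For example, an $A_4$ section has $A_4'\cong C_2\times C_2$, and $\mathrm{SL}_2(\mathbb F_3)'\cong Q_8$ is noncyclic. This contradicts Lemma~\ref{lem:cyclic-commutator}. Where DeVos's exceptional cases are instead stated via a quotient that is simple or otherwise nonsolvable, we use Proposition~\ref{prop:closure} and solvability of metacyclic groups to exclude them directly.

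The surviving families (a)–(c) keep their numerical data: sizes, deficiencies, and $[H_1:H]\mid 16$. The $2$-power index condition in (b) and (c) is inherited from DeVos's statement, where the quotient $H_1/H$ embeds in a $2$-group attached to the configuration. I would record this unchanged rather than sharpen it, since sharpening is not needed later.

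The main obstacle is bookkeeping against DeVos's precise list. I have to confirm, case by case, which group structure each excluded terminal configuration forces on a section, and that this forced structure really has a noncyclic commutator subgroup. If some excluded case only forces a group such as $Q_8$, whose commutator subgroup is cyclic, then the cyclic-commutator argument fails. In that case the fallback is to use the full metacyclic property of the section, namely that any section isomorphic to the forced group would be metacyclic. We would then check that the specific configuration, for example a $2$-group of rank or exponent incompatible with being metacyclic, is not metacyclic.
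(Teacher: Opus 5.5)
Your plan matches the paper's proof: cite DeVos's Theorem~2.3 and rule out the one extra terminal alternative by applying Lemma~\ref{lem:cyclic-commutator} to the section $G_m/H$. The bookkeeping you flagged settles uniformly, since that alternative (DeVos's 3(d)) has $G_m/H$ isomorphic to one of $C_2\times S_4$, $C_2\times A_4$, $S_4$, $C_2\times A_5$, $A_5$, $S_5$, $A_6$, $S_6$, each of which contains $A_4$ with $A_4'\cong C_2\times C_2$ noncyclic; so no quaternion or $\mathrm{SL}_2(\mathbb F_3)$ case arises and your fallback is never needed.
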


\begin{proof}
DeVos's classification \cite[Theorem~2.3]{DeVos2013} gives the stated chain and alternatives, together with one additional terminal case, alternative $3(d)$. In that case, $G_m/H$ is isomorphic to one of
\[
C_2\times S_4,\quad C_2\times A_4,\quad S_4,\quad C_2\times A_5,
\quad A_5,\quad S_5,\quad A_6,\quad S_6.
\]
Each of these groups contains a subgroup isomorphic to $A_4$. Since $A_4'\cong C_2\times C_2$, its commutator subgroup is noncyclic; as the commutator subgroup of a subgroup is contained in that of the ambient group, the displayed groups also have noncyclic commutator subgroup. On the other hand, $G_m/H$ is a section of the metacyclic group $G$, and hence has cyclic commutator subgroup by Lemma~\ref{lem:cyclic-commutator}. This is a contradiction.
\end{proof}

\section{The exact arithmetic trio region}\label{sec:region}

Let $G$ be a finite metacyclic group of order $N$. Define recursively on $N$ a finite region
\[
\TG\subseteq\{0,1,\ldots,N\}^3.
\]
Its vectors are designed to record the cardinalities of subset trios in $G$; Sections~\ref{sec:soundness} and~\ref{sec:completeness} prove that the correspondence is exact.

We begin with all vectors having a zero coordinate and all positive vectors satisfying
\[
a+b+c\le N+1.
\]
The latter are feasible by Lemma~\ref{lem:elementary-trios}. After adding the vectors described below, we close $\TG$ under permutations of the three coordinates and under coordinatewise decrease.

\subsection{Beat rules}

Let $H<G$ be a proper subgroup, and write $D=|H|$, $q=[G:H]=N/D$. For $1\le j\le q-1$, add
\begin{equation}\label{eq:B0}
B0:\quad \bigl(D,jD,(q-j)D\bigr).
\end{equation}
Moreover, whenever $(a,b,c)\in\TsubH$ has all three coordinates positive, and $0\le j\le q-1$, add
\begin{equation}\label{eq:B1}
B1:\quad \bigl(a,jD+b,(q-j-1)D+c\bigr).
\end{equation}
Rule $B0$ corresponds to the pure beat configuration, whereas $B1$ incorporates a critical continuation inside the proper subgroup $H$.

\subsection{Cyclic-extension rules}

Let $H\triangleleft G$ be a proper normal subgroup such that $G/H\cong C_q$, and put $D=|H|$. For integers $k,\ell\ge1$ satisfying $k+\ell\le q$, add
\begin{equation}\label{eq:C0}
C0:\quad \bigl(kD,\ell D,(q-k-\ell+1)D\bigr).
\end{equation}
If $(a,b,c)\in\TsubH$ has positive coordinates, then for $k,\ell\ge0$ satisfying $k+\ell\le q-1$, add
\begin{equation}\label{eq:C1}
C1:\quad \bigl(kD+a,\ell D+b,(q-k-\ell-1)D+c\bigr).
\end{equation}
The rule $C0$ corresponds to a pure cyclic chord, while $C1$ places a recursively feasible continuation in the boundary cosets of the cyclic quotient.

\subsection{Normal quotient lifting}

Let $H\triangleleft G$ satisfy $1<|H|<N$, and put $D=|H|$. If $(a,b,c)\in\mathcal T_{G/H}$, then add
\begin{equation}\label{eq:Q}
Q:\quad (Da,Db,Dc).
\end{equation}

\subsection{Exceptional bounded-coset rules}

For every proper subgroup $H<G$, write $D=|H|$. For every
\[
(i,j)\in\{(2,2),(2,3),(3,2)\}
\]
and every choice of distinct left cosets $x_1H,\ldots,x_iH$ and distinct right cosets $Hy_1,\ldots,Hy_j$, set
\begin{equation}\label{eq:V-z}
z=z(H;x_1,\ldots,x_i;y_1,\ldots,y_j)
=\left|\bigcup_{\alpha=1}^{i}\bigcup_{\beta=1}^{j}x_\alpha H y_\beta\right|
\end{equation}
and add the vector
\begin{equation}\label{eq:V}
V:\quad (iD,jD,N-z).
\end{equation}
All admissible choices are included; if the required number of distinct cosets is unavailable, that parameter choice adds no vector. Independent left translation of the first component and right translation of the second preserve product cardinality, so it suffices to enumerate choices with $x_1=y_1=1$.

\begin{remark}
The value of $z$ depends on the chosen cosets, not only on $i,j,D$. After the normalization $x_1=y_1=1$, rule $V$ requires at most three free representatives and at most six double translates of $H$.
\end{remark}

\subsection{Dihedral rules}

Suppose that $H\triangleleft G$ and $G/H\cong D_n$, where $D_n$ denotes the dihedral group of order $2n$, $n\ge3$. Let $D=|H|$. For $k,\ell\ge0$ satisfying $k+\ell\le n-2$, add
\begin{equation}\label{eq:D0}
D0:\quad
\bigl(2(k+1)D,2(\ell+1)D,2(n-k-\ell-1)D\bigr).
\end{equation}
For $n\ge4$, define a finite boundary region $\WH\subseteq\{0,\ldots,4D\}^3$ as follows. If $(x,y,z)\in\WH$, then for $k,\ell\ge1$ with $k+\ell\le n-2$, add
\begin{equation}\label{eq:D1}
D1:\quad
\bigl(2(k-1)D+x,2(\ell-1)D+y,2(n-k-\ell-1)D+z\bigr).
\end{equation}

\subsubsection{The dihedral boundary region}

Consider the six vertices $X_0,X_1,Y_0,Y_1,Z_0,Z_1$ and the oriented edges
\[
A_{uv}:X_u\to Y_v,\qquad
B_{vw}:Y_v\to Z_w,\qquad
C_{wu}:Z_w\to X_u,
\]
where $u,v,w\in\{0,1\}$. The underlying graph is $K_{2,2,2}$, with eight oriented triangular faces $(A_{uv},B_{vw},C_{wu})$.

At this stage an edge label is formal: it is empty, full, or proper, with cardinality $0$, $D$, or the specified positive integer. The admissible masks below define $\WH$ by summing the four edge cardinalities in each of the $A$, $B$, and $C$ families and then closing under coordinate permutations. Lemma~\ref{lem:boundary-soundness} realizes every admissible mask simultaneously by subsets of $H$.

\paragraph{W0.}
Every edge is formally empty or full, and every triangular face contains an empty edge.

\paragraph{W1.}
Choose one triangular face and assign to its three edges a vector $(a,b,c)\in\TsubH$ with $1\le a,b,c<D$. Every remaining edge is empty or full, and every other triangular face must contain an empty edge. All masks satisfying this condition are included. The restriction $1\le a,b,c<D$ is automatic for a nontrivial face with no empty edge, since a full component $H$ cannot occur in a subset trio whose other two components are nonempty.

For both patterns with two designated faces, their positions, after a permutation of vertex parts and an interchange of the two vertices within a part, are fixed by
\[
(A_{00},B_{00},C_{00},B_{01},C_{10})=(P,Q,R,S,T).
\]
Thus the designated faces are $(P,Q,R)$ and $(P,S,T)$, sharing the same label $P$. Every other edge is empty or full, and each of the other six faces must contain an empty edge. Every such mask is used. Reversing orientations is accompanied by inversion of labels when these formal patterns are realized.

\paragraph{W2A.}
Choose proper subgroups $K\le L<H$, and write
\[
k_0=|K|,\qquad \ell_0=|L|,\qquad D=|H|.
\]
Let $(a,b,c)\in\mathcal T_K$ satisfy $1\le a,b,c<k_0$, and choose integers
\[
0\le u\le \frac{D}{k_0}-1,
\qquad
1\le v\le \frac{D}{\ell_0}-1.
\]
The five proper boundary labels have cardinalities
\begin{equation}\label{eq:W2A}
(|P|,|Q|,|R|,|S|,|T|)
=\bigl(a,uk_0+b,D-(u+1)k_0+c,v\ell_0,D-v\ell_0\bigr).
\end{equation}

\paragraph{W2B.}
Choose proper subgroups $K,L<H$, write
\[
k_0=|K|,\qquad \ell_0=|L|,
\]
and choose
\[
1\le u\le \frac{D}{k_0}-1,
\qquad
1\le v\le \frac{D}{\ell_0}-1.
\]
The corresponding five proper boundary labels have cardinalities
\begin{equation}\label{eq:W2B}
(|P|,|Q|,|R|,|S|,|T|)
=\bigl(|K\cap L|,uk_0,D-uk_0,v\ell_0,D-v\ell_0\bigr).
\end{equation}

\begin{remark}
The graph $K_{2,2,2}$ is the boundary structure attached to an impure dihedral chord: its six vertices encode the two endpoint positions in the three vertex classes, its twelve edges encode boundary cosets, and its eight triangular faces encode the possible local identity products.
\end{remark}

For every arithmetically allowed boundary pattern, let $x,y,z$ be the sums of the four edge cardinalities in the $XY$, $YZ$, and $ZX$ families, respectively. Include $(x,y,z)$ and all its coordinate permutations in $\WH$.

\subsection{The recursive product-set function}

\begin{definition}\label{def:Mstar}
Let $G$ be a finite metacyclic group of order $N$. Define
\[
\Mstar(r,s)=N-\max\{t:(r,s,t)\in\TG\}.
\]
\end{definition}

Since every recursive reference in the construction of $\TG$ occurs in a proper subgroup or a proper quotient of $G$, and since these groups are again metacyclic by Proposition~\ref{prop:closure}, the definition is recursive on the group order. All remaining parameter ranges are finite. At the trivial group the region consists exactly of the coordinate planes. The inclusion of $(r,s,0)$ ensures that the maximum in Definition~\ref{def:Mstar} is nonempty. Hence $\TG$ and $\Mstar(r,s)$ are well defined.

These rules can be evaluated directly from the H\"older parameters. In the normal form $\tau^x\eta^i$, multiplication is
\begin{equation}\label{eq:holder-multiplication}
(x,i)(y,j)=
\left(x+h^i y+g\left\lfloor\frac{i+j}{m}\right\rfloor\pmod n,
      i+j\pmod m\right).
\end{equation}
For $H=\Psi(k,\ell,\beta)$ and the parameters $D,t$ in Theorem~\ref{thm:yang-subgroups}, membership of a canonical word is the arithmetic condition
\begin{equation}\label{eq:yang-membership}
\tau^x\eta^i\in H
\quad\Longleftrightarrow\quad
t\mid i\ \text{ and }\
x\equiv\beta\sum_{j=0}^{i/t-1}h^{tj}\pmod D.
\end{equation}
The sum is zero when $i=0$. Equation~\eqref{eq:yang-membership} gives containment and intersection tests for the Yang subgroups used in $W2A$ and $W2B$. It also evaluates \eqref{eq:V-z}: for each canonical element $z_0$, one checks whether $x_\alpha^{-1}z_0y_\beta^{-1}\in H$ for one of the at most six relevant pairs $(\alpha,\beta)$. Theorem~\ref{thm:yang-subgroups} supplies the subgroup and quotient presentations. A quotient of order $q$ is cyclic exactly when it contains an element of order $q$; a quotient of order $2n$, $n\ge3$, is dihedral exactly when it contains $R,F$ with
\[
\ord(R)=n,\qquad F^2=1,\qquad F\notin\angles{R},\qquad FRF=R^{-1}.
\]
Together with \eqref{eq:holder-multiplication}, these are finite arithmetic tests from the H\"older parameters and make every rule in the recursion effective.

\section{Soundness of the arithmetic region}\label{sec:soundness}

We prove that every vector generated in $\TG$ is realized by a subset trio in $G$.

\begin{lemma}\label{lem:basic-soundness}
Every vector generated by the rules $B0$, $B1$, $C0$, $C1$, $Q$, and $V$ is the cardinality vector of a subset trio, provided that every recursive input vector is feasible.
\end{lemma}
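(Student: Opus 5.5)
The plan is to give an explicit subset trio for each rule, taking any recursive input vector to be realized by a trio in the relevant subgroup or quotient. For \emph{B0}, write $G=\bigsqcup_{i=0}^{q-1} Hx_i$ as a union of right cosets with $x_0=1$. Put $A=H$, $B=\bigcup_{i=1}^{j}Hx_i$ and $C=\bigl(G\setminus HB\bigr)^{-1}$. Since $HB=B$, we get $AB=B$. The set $G\setminus B$ is a union of $q-j$ right cosets, so its inverse is a union of $q-j$ left cosets and has cardinality $(q-j)D$. Moreover $1\notin ABC$ holds by construction. For \emph{B1}, start from a trio $(A_0,B_0,C_0)$ in $H$ realizing $(a,b,c)$ and fix a decomposition $G=\bigsqcup_{i=0}^{q-1}Hx_i$ with $x_0=1$. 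Set $A=A_0$ and $B=B_0\cup\bigcup_{i=1}^{j}Hx_i$. Then $AB=A_0B_0\cup\bigcup_{i\le j}Hx_i$, and the complement of $AB$ in $G$ contains $H\setminus A_0B_0$ together with the $q-j-1$ remaining cosets. The set $C_0^{-1}$ lies in $H\setminus A_0B_0$, so $C=C_0\cup\bigl(\bigcup_{i>j}Hx_i\bigr)^{-1}$ has the required size $(q-j-1)D+c$ and avoids $(AB)^{-1}$.

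For \emph{C0} and \emph{C1}, use $H\triangleleft G$ with $G/H=\langle gH\rangle\cong C_q$. In \emph{C0}, take $A=\bigcup_{i=0}^{k-1}g^iH$ and $B=\bigcup_{i=0}^{\ell-1}g^iH$. Then $AB=\bigcup_{i=0}^{k+\ell-2}g^iH$, which consists of $k+\ell-1$ cosets, and $C$ is the inverse of the complement. In \emph{C1}, place the continuation trio $(A_0,B_0,C_0)$ in boundary cosets. Take $A=\bigcup_{i=0}^{k-1}g^iH\cup g^kA_0$ and $B=\bigcup_{i=0}^{\ell-1}g^iH\cup g^{\ell}B_0'$, where $B_0'$ is a conjugate-adjusted copy of $B_0$ such that $g^kA_0\,g^{\ell}B_0'=g^{k+\ell}A_0B_0$. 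Concretely, $B_0'=g^{-\ell}\,g^{-k}\bigl(g^kB_0g^{-k}\bigr)g^{k}g^{\ell}$ is chosen so that the product becomes $g^{k+\ell}\cdot(\text{translate of }A_0B_0)$, using normality of $H$. Cosets with exponent at most $k+\ell-1$ are full, coset $k+\ell$ meets $AB$ only in a translate of $A_0B_0$, and the remaining $q-k-\ell-1$ cosets are untouched. The set $C$ is then the union of these remaining cosets (inverted) together with a translate of $C_0$ in coset $k+\ell$, which gives cardinality $(q-k-\ell-1)D+c$.

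The main obstacle is verifying this bookkeeping. One has to check that products $g^iH\cdot g^{\ell}B_0'$ with $i<k$, and $g^kA_0\cdot g^iH$ with $i<\ell$, land only in full cosets of exponent at most $k+\ell-1$. One also has to confirm that the similarity $(xAy^{-1},yBz^{-1},zCx^{-1})$ can absorb the conjugation by $g^k$. The cleanest route is probably to take $B=\bigcup_{i<\ell}Hg^i\cup B_0g^{\ell}$ and $A=\bigcup_{i<k}g^iH\cup g^kA_0$. Then $g^kA_0B_0g^{\ell}$ lies in the coset $g^{k+\ell}H$ and has size $|A_0B_0|$, and its inverse complement contains $g^{-\ell}C_0g^{-k}$.

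For \emph{Q}, lift a trio $(\bar A,\bar B,\bar C)$ in $G/H$ to full preimages $A,B,C$. Each preimage has size $D$ times the original, and $1\in ABC$ would force $\bar1\in\bar A\bar B\bar C$, which is impossible. For \emph{V}, set $A=\bigcup_\alpha x_\alpha H$ and $B=\bigcup_\beta Hy_\beta$. Then $AB=\bigcup x_\alpha Hy_\beta$ because $HH=H$, so $|AB|=z$, and $C=G\setminus(AB)^{-1}$ has size $N-z$. Finally, I will note that the closure under permutations and coordinatewise decrease is justified by Lemma~\ref{lem:symmetry}, and that the vectors with a zero coordinate, or with $a+b+c\le N+1$, are feasible trivially or by Lemma~\ref{lem:elementary-trios}.
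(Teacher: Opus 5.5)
Your proposal is correct and follows the paper's proof rule by rule. It uses full-coset constructions for $B0$, $C0$, $Q$ and $V$, and for $B1$ and $C1$ it places a continuation trio in the boundary cosets with full cosets elsewhere. In $C1$ your final boundary pieces $g^kA_0$, $B_0g^\ell$, $g^{-\ell}C_0g^{-k}$ differ only cosmetically from the paper's $x^{k+\ell}A_0x^{-\ell}$, $x^\ell B_0$, $C_0x^{-(k+\ell)}$: both make the boundary product a conjugate of $A_0B_0C_0$. Your earlier claim that $g^kA_0\,g^{\ell}B_0'=g^{k+\ell}A_0B_0$ is not literally true and should be dropped in favour of this clean version.
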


\begin{proof}
We consider the rules separately. For $B0$, let $H<G$, let $D=|H|$, and let $q=[G:H]$. Choose distinct right cosets $Hx_1,\ldots,Hx_j$ and set
\[
A=H,\qquad
B=\bigcup_{i=1}^{j}Hx_i,\qquad
C=G\setminus(AB)^{-1}.
\]
Since $AB=B$, we have
\[
|A|=D,\qquad |B|=jD,\qquad |C|=(q-j)D.
\]
Thus $B0$ is feasible.

For $B1$, let $(A_0,B_0,C_0)$ be a subset trio in $H$ with cardinality vector $(a,b,c)$. Choose $j$ right cosets $Hx_1,\ldots,Hx_j$, distinct from $H$, and let
\[
A=A_0,\qquad
B=B_0\cup\bigcup_{i=1}^{j}Hx_i.
\]
Choose $q-j-1$ of the remaining right cosets of $H$, say $Hy_1,\ldots,Hy_{q-j-1}$, and put
\[
C=C_0\cup\bigcup_{i=1}^{q-j-1}y_i^{-1}H.
\]
Since $A_0$ is a nonempty subset of $H$, one has $A_0Hx_i=Hx_i$ and
\[
AB=A_0B_0\cup\bigcup_{i=1}^{j}Hx_i.
\]
On the other hand,
\[
C^{-1}=C_0^{-1}\cup\bigcup_{i=1}^{q-j-1}Hy_i.
\]
The two displayed unions are disjoint: inside $H$ this is the subset-trio condition for $(A_0,B_0,C_0)$, and outside $H$ it follows from the distinct right cosets selected. Only right cosets are used, so normality of $H$ is not required. Hence $(A,B,C)$ is product-free and has cardinality vector
\[
\bigl(a,jD+b,(q-j-1)D+c\bigr).
\]

We next consider the cyclic rules. Suppose that $H\triangleleft G$ and $G/H\cong C_q$. For $C0$, let $\pi:G\to G/H$ be the quotient map and identify $G/H$ with $\Z/q\Z$. Take
\[
I=\{0,\ldots,k-1\},\qquad
J=\{0,\ldots,\ell-1\},\qquad
K=(\Z/q\Z)\setminus-(I+J).
\]
Then $|I|=k$, $|J|=\ell$, $|K|=q-k-\ell+1$, and $0\notin I+J+K$. Hence
\[
\bigl(\pi^{-1}(I),\pi^{-1}(J),\pi^{-1}(K)\bigr)
\]
is product-free and realizes $C0$.

For $C1$, choose $x\in G$ such that $xH$ generates $G/H$, and let $(A_0,B_0,C_0)$ be a subset trio in $H$ of sizes $(a,b,c)$. Take the full cosets $x^iH$ for $0\le i<k$ in the first component and $x^jH$ for $0\le j<\ell$ in the second. In the third take the full cosets $Hx^{-t}$ for $k+\ell+1\le t\le q-1$. Empty index ranges contribute nothing. At the three boundary positions use
\[
x^k(x^\ell A_0x^{-\ell}),\qquad
x^\ell B_0,\qquad
C_0x^{-(k+\ell)},
\]
respectively. At the quotient level, every possible identity product is excluded except the one involving the three boundary cosets. For boundary elements we have
\[
x^k(x^\ell a x^{-\ell})x^\ell bc x^{-(k+\ell)}
=x^{k+\ell}abcx^{-(k+\ell)},
\]
which cannot be the identity because $1\notin A_0B_0C_0$. Thus $C1$ is feasible and has cardinality vector
\[
\bigl(kD+a,\ell D+b,(q-k-\ell-1)D+c\bigr).
\]

For the quotient rule $Q$, let $\pi:G\to G/H$ be the natural map and let $(A_0,B_0,C_0)$ be product-free in $G/H$. Then
\[
\bigl(\pi^{-1}(A_0),\pi^{-1}(B_0),\pi^{-1}(C_0)\bigr)
\]
is product-free in $G$. Since every fiber of $\pi$ has cardinality $D=|H|$, its cardinality vector is $(Da,Db,Dc)$.

Finally, consider $V$. With the notation of \eqref{eq:V-z}, define $C=G\setminus(AB)^{-1}$ where
\[
A=\bigcup_{\alpha=1}^{i}x_\alpha H,
\qquad
B=\bigcup_{\beta=1}^{j}Hy_\beta.
\]
Then
\[
AB=\bigcup_{\alpha=1}^{i}\bigcup_{\beta=1}^{j}x_\alpha H y_\beta,
\]
so that $|AB|=z$. Therefore
\[
(|A|,|B|,|C|)=(iD,jD,N-z),
\]
and $V$ is feasible.
\end{proof}

\begin{lemma}[Boundary soundness]\label{lem:boundary-soundness}
Every vector in $\WH$ is realized by a labelling of the twelve edges of $K_{2,2,2}$ by subsets of $H$ such that the three labels on every oriented triangular face form a subset trio, provided that all recursive input vectors are feasible.
\end{lemma}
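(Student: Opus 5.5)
Since $\WH$ is defined by summing edge cardinalities over admissible masks and then closing under coordinate permutations, it suffices to realize each admissible mask. We must produce subsets of $H$ for the twelve edges, with the prescribed cardinalities, such that every oriented face $(A_{uv},B_{vw},C_{wu})$ gives a subset trio, i.e.\ $1\notin A_{uv}B_{vw}C_{wu}$. Permutations of the coordinates then follow by cyclically relabelling the vertex parts. For the transposition we relabel by reversing orientations and replacing every label by its inverse, as in Lemma~\ref{lem:symmetry}; this maps face trios to face trios. Throughout, an empty edge is labelled $\varnothing$ and a full edge is labelled $H$. The key observation is that a face containing an empty edge is automatically a subset trio. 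So in every pattern the only faces needing verification are the designated ones (none, one, or two).

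The first two patterns are direct. For $W0$, all faces contain an empty edge and there is nothing to check. For $W1$, the recursive hypothesis gives a trio $(A_0,B_0,C_0)$ in $H$ with vector $(a,b,c)$. We place $A_0,B_0,C_0$ on the three edges of the designated face. Every other face contains an empty edge, so it is harmless even when it shares a proper edge with the designated face.

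The main work is the two-face patterns. Here the label $P=A_{00}$ is shared by the faces $(P,Q,R)$ and $(P,S,T)$, and we need $1\notin PQR$ and $1\notin PST$ at the same time.

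For $W2B$ we take $P=K\cap L$, $Q$ a union of $u$ left cosets $xK$, and $R=H\setminus (PQ)^{-1}=H\setminus Q^{-1}$, using $PQ=Q$ because $P\le K$. This gives $|R|=D-uk_0$. Similarly, $S$ is a union of $v$ left cosets of $L$ and $T=H\setminus S^{-1}$, using $PS=S$ because $P\le L$. Both faces are then product-free.

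For $W2A$, with $K\le L$, we use the $B1$ construction inside $H$ relative to $K$. Take a trio $(A_0,B_0,C_0)$ in $K$ realizing $(a,b,c)$, set $P=A_0$, let $Q$ be $B_0$ together with $u$ right cosets $Kx_i\neq K$, and let $R$ be $C_0$ together with the inverses of $D/k_0-u-1$ of the remaining right cosets of $K$. This is exactly the verification in Lemma~\ref{lem:basic-soundness}, now inside $H$, and it gives $(a,uk_0+b,D-(u+1)k_0+c)$.

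For the second face in $W2A$, since $P\subseteq K\subseteq L$, take $S$ to be a union of $v$ right cosets $Ly$ with $y\notin L$ and $T=H\setminus S^{-1}$. Then $PS=S$ because $PL=L$, so $1\notin PST$, and $|S|=v\ell_0$, $|T|=D-v\ell_0$. The requirement $v\ge1$ together with choosing cosets $Ly\neq L$ is harmless, since $PS=S$ holds for any cosets; the only need is $v\le D/\ell_0-1$ so that $T\ne\varnothing$.

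The main obstacle is bookkeeping rather than algebra. The edge positions in the two-face patterns are fixed only up to permutation of the vertex parts and swapping the two vertices within a part, and orientation reversal must be paired with inverting labels. I would check explicitly that these normalizations send face trios to face trios via the similarity operations of Section~\ref{sec:supporting}. I would also confirm that no undesignated face is left with all three edges nonempty, which holds by hypothesis.
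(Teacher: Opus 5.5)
Your plan follows the paper's. $W0$ and $W1$ are immediate because every undesignated face contains an empty edge. $W2A$ is the $B1$ beat inside $H$ relative to $K$, together with a pure beat on right $L$-cosets, and this part of your argument is correct. $W2B$ is two pure beats sharing $P=K\cap L$. Your handling of coordinate permutations is also fine: cyclic relabelling of the vertex parts, and orientation reversal with inversion of labels for a transposition.

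The $W2B$ step fails as written, however. In both designated faces the shared label $P$ sits in the first position, so it multiplies $Q$ and $S$ from the left. The identity $PQ=Q$ therefore needs $Q$ to be left-$P$-invariant, i.e.\ a union of right cosets $Kx$. For a left coset, $P\,xK=x(x^{-1}Px)K$, which is strictly larger than $xK$ unless $x^{-1}Px\subseteq K$. Since $H$ need not be abelian and $K,L$ need not be normal in $H$, this is a genuine failure.

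For instance, $H=\langle\tau^5,\eta^2\rangle\cong C_7\rtimes C_3$ is the kernel of the surjection $C_{35}\rtimes_9 C_6\to D_5$. Take $K=L$ a Sylow $3$-subgroup, so $P=K$ and $N_H(K)=K$. For $Q=xK$ with $x\notin K$, you get $|PQ|=|KxK|=9$, while $R=H\setminus Q^{-1}$ has $18$ elements. Since $18+9>21$, $R$ meets $(PQ)^{-1}$ and $1\in PQR$. The same objection applies to your $S$ built from left cosets of $L$.

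The repair is the choice you already made in $W2A$, which is also the paper's. Take $Q$ to be a union of $u$ right cosets $Kx$ and $S$ a union of $v$ right cosets $Ly$. Then $PQ=Q$ and $PS=S$ because $P\subseteq K\cap L$, and $R=H\setminus Q^{-1}$, $T=H\setminus S^{-1}$ have the required sizes $D-uk_0$ and $D-v\ell_0$.
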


\begin{proof}
For a pattern of type $W0$, every triangular face contains an empty edge, so the assertion is immediate.

For $W1$, place a subset trio realizing the prescribed vector $(a,b,c)\in\TsubH$ on the three edges of the designated face. All remaining edges are either empty or equal to $H$, and the pattern is chosen so that every other face contains an empty edge. Hence every face is product-free.

Consider $W2A$. The two faces not forced to contain an empty edge share the label $P$. By construction,
\[
(|P|,|Q|,|R|)=\bigl(a,uk_0+b,D-(u+1)k_0+c\bigr),
\]
which is a beat vector obtained from a feasible realization of the recursive input trio $(a,b,c)\in\mathcal T_K$. By Lemma~\ref{lem:basic-soundness}, the labels $P,Q,R$ can therefore be chosen so that $1\notin PQR$. Moreover, $P\subseteq K\le L$. Choose $S$ as a union of $v$ right $L$-cosets and put $T=H\setminus S^{-1}$. Then $PS\subseteq S$, and consequently $1\notin PST$. Thus both nontrivial faces are product-free.

For $W2B$, put $P=K\cap L$. Choose $Q$ as a union of $u$ right $K$-cosets and set $R=H\setminus Q^{-1}$. Since $P\subseteq K$, $PQ\subseteq Q$, and hence $1\notin PQR$. Similarly, choose $S$ as a union of $v$ right $L$-cosets and put $T=H\setminus S^{-1}$. Since $P\subseteq L$, we have $PS\subseteq S$, and therefore $1\notin PST$. Every remaining face in $W2A$ and $W2B$ contains an empty edge. Hence all eight triangular faces are product-free.
\end{proof}

\begin{lemma}[Dihedral lifting]\label{lem:dihedral-lifting}
Every vector generated by $D0$ or $D1$ is feasible, provided that all recursive input vectors are feasible. This remains true when the extension
\[
1\longrightarrow H\longrightarrow G\longrightarrow D_n\longrightarrow1
\]
does not split.
\end{lemma}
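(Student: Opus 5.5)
Both rules are realized by pulling back along $\pi:G\to G/H\cong D_n$. The key design choice is to work only with cosets of $H$ and with translates of subsets of $H$, never with a section of $\pi$, so that no splitting is needed.

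Write $D_n=\angles{\rho,\sigma}$ with $\rho^n=\sigma^2=1$ and $\sigma\rho\sigma=\rho^{-1}$.

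\textbf{Rule $D0$.} We use a dihedral chord in $D_n$. Take
\[
I=\{\rho^i,\rho^i\sigma:0\le i\le k\},\qquad
J=\{\rho^j,\sigma\rho^{j}:0\le j\le \ell\}.
\]
Then $|I|=2(k+1)$ and $|J|=2(\ell+1)$. A direct count shows that $IJ$ consists of the rotations $\rho^{0},\ldots,\rho^{k+\ell}$ together with the reflections $\rho^{t}\sigma$ for $-\ell\le t\le k$. So $|IJ|=2(k+\ell+1)$, using $k+\ell\le n-2$ to exclude wrap-around. Put $K=D_n\setminus(IJ)^{-1}$, which has $2(n-k-\ell-1)$ elements, and lift by $\pi^{-1}$ exactly as in rule $Q$ of Lemma~\ref{lem:basic-soundness}. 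Full preimages of a product-free trio are product-free whether or not the extension splits, since $\pi$ is a homomorphism. This gives the vector of $D0$.

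\textbf{Rule $D1$.} Here the same chord is used, but with its extreme layers shrunk. In the quotient, the identity can be reached only through specific triples of quotient elements lying on the ends of the arithmetic progressions. These are the two endpoint positions in each of the three components, i.e.\ the vertices $X_u,Y_v,Z_w$. A minimal identity product $\alpha\beta\gamma=1$ in $D_n$ with $\alpha,\beta,\gamma$ boundary elements corresponds to an oriented face of $K_{2,2,2}$.

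I would fix coset representatives $g_\alpha\in\pi^{-1}(\alpha)$ for the boundary elements. For $\alpha\beta\gamma=1$ we have $g_\alpha g_\beta g_\gamma=h_{\alpha\beta\gamma}\in H$, and by choosing representatives along each face (conjugating as in the proof of $C1$) we write each relevant product as a conjugate of a product of three $H$-elements. I would then put in the first component the full preimages of the $2(k-1)$ interior elements, plus boundary pieces of the form $g_\alpha\,(\text{edge label})$ with conjugation twists. The labels come from Lemma~\ref{lem:boundary-soundness}, and the second and third components are built the same way.

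The full interior cosets contribute $2(k-1)D$, $2(\ell-1)D$ and $2(n-k-\ell-1)D$. The labels contribute $x,y,z$. Any identity product must land in a boundary triple forming a face, since interior triples are excluded by the quotient chord computation for the shrunk parameters. There it reduces, after conjugation, to $1\in PQR$ for the face labels, which Lemma~\ref{lem:boundary-soundness} forbids. The hypothesis $n\ge4$ and $k,\ell\ge1$ keeps the six endpoints distinct.

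\textbf{Main obstacle.} The hardest step is the nonsplit bookkeeping: making the twisting elements $h_{\alpha\beta\gamma}$ disappear consistently on all eight faces at once, since an edge lies in two faces. My plan is to absorb $h$ into the labels. Because each edge label occurs with a fixed orientation in both of its faces, I would choose representatives so that every face relation reads $g_\alpha g_\beta g_\gamma=1$ modulo conjugation of the label set. If that fails, I would instead replace labels by translates $S h$, which preserves cardinalities and is a similarity in the sense of Section~\ref{sec:supporting}. I would then check that the translations demanded on the two faces sharing a label agree. This is automatic in $W0$ and $W1$, where only one face is nontrivial. In $W2A$ and $W2B$ it needs the shared label $P$ to be translated compatibly, and that is where I expect to have to argue carefully, using that $P$ lies in a subgroup $K\cap L$ absorbing the discrepancy.
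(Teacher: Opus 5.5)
Your $D0$ construction is correct and matches the paper's: $I$ and $J$ are $\{1,\dots,\rho^k\}\{1,\sigma\}$ and $\{1,\sigma\}\{1,\dots,\rho^\ell\}$, and full preimages need no splitting. The gap is in $D1$, at the step you call the main obstacle, and that step is the whole content of the nonsplit clause.

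Suppose you attach a representative $g_\alpha$ to each of the twelve endpoint cosets and place $g_\alpha E$ there. Then an oriented face contributes
\[
g_\alpha E_A\,g_\beta E_B\,g_\gamma E_C=h_{\alpha\beta\gamma}\,\phi(E_A)\,\psi(E_B)\,E_C,
\]
where $h_{\alpha\beta\gamma}\in H$ and $\phi,\psi$ are conjugations by elements of $G$ that depend on the face. Since every edge lies on two faces, the condition $1\notin E_AE_BE_C$ no longer controls the lifted product. Your fallback does not repair this:
\begin{itemize}
\item The discrepancies $h_{\alpha\beta\gamma}$ are arbitrary elements of $H$ determined by the extension. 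A translate $Ph$ therefore generally leaves $K\cap L$, which destroys the coset-absorption relations ($PQ\subseteq Q$, $PS\subseteq S$) used in Lemma~\ref{lem:boundary-soundness}.
\item Translations alone cannot remove the twists $\phi,\psi$, which can make $|\phi(E_A)\psi(E_B)|$ larger than $|E_AE_B|$.
\end{itemize}
As written, you have not shown that a coherent choice of representatives exists.

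The missing idea is to lift the six \emph{vertices} rather than the twelve edges. Take $\overline X_0=\overline Y_0=\overline Z_0=1$, $\overline X_1=\sigma\rho^{-k}$, $\overline Y_1=\sigma$, $\overline Z_1=\sigma\rho^{\ell}$. Then the twelve endpoint elements are exactly $\overline X_u^{-1}\overline Y_v$, $\overline Y_v^{-1}\overline Z_w$ and $\overline Z_w^{-1}\overline X_u$. Choose arbitrary lifts $x_u,y_v,z_w\in G$, and on the edge from $x$ to $y$ with label $E\subseteq H$ put the sandwiched set $x^{-1}Ey$. Around every face
\[
(x^{-1}E_Ay)(y^{-1}E_Bz)(z^{-1}E_Cx)=x^{-1}(E_AE_BE_C)x,
\]
which does not contain $1$. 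Consistency on shared edges is automatic, because each piece depends only on its two endpoints. No property of the $W$-type is used; in particular nothing about $K\cap L$ is needed.

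You also need to specify the third quotient support,
\[
C^+=(D_n\setminus X^{-1})\cup\{1,\sigma\rho^{-k},\rho^{-k-\ell},\sigma\rho^{\ell}\},\qquad X=IJ,
\]
and check that the only solutions of $\alpha\beta\gamma=1$ in $I\times J\times C^+$ are the eight face triples. Your description (``extreme layers shrunk'', with endpoint positions identified with the vertices $X_u,Y_v,Z_w$) does not determine this. Each component has four endpoint cosets, and these are the edges, not the vertices. Moreover, $C^+$ is obtained from the $D0$ complement by \emph{adding} four cosets of $X^{-1}$, not by shrinking anything.
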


\begin{proof}
Let $\pi:G\to D_n$ be the quotient map, and write
\[
D_n=\angles{r,f\mid r^n=f^2=1,\ frf=r^{-1}}.
\]
In the quotient put
\[
A^+=\{1,r,\ldots,r^k\}\{1,f\},
\qquad
B^+=\{1,f\}\{1,r,\ldots,r^\ell\}.
\]
Their product is
\[
X=\{1,r,\ldots,r^{k+\ell}\}\{1,fr^\ell\}.
\]
Since $k+\ell\le n-2$, the three sizes are $2(k+1)$, $2(\ell+1)$ and $2(k+\ell+1)$. The inverse images of $A^+$, $B^+$ and $D_n\setminus X^{-1}$ therefore realize $D0$.

For $D1$, assume $k,\ell\ge1$ and instead put
\[
C^+=(D_n\setminus X^{-1})\cup
\{1,fr^{-k},r^{-k-\ell},fr^\ell\}.
\]
The four added points are distinct elements of $X^{-1}$, so
\[
|C^+|=2(n-k-\ell+1).
\]
The endpoint cosets in the three hulls are organized by the octahedral configuration shown in Figure~\ref{fig:dihedral}. For the third edge family the indices are $(w,u)$, in accordance with the definition of the boundary graph.

\begin{figure}[htpb]
\centering

\begin{minipage}[c]{0.43\textwidth}
\centering

\begin{tikzpicture}[
    scale=1.0,
    edge/.style={
        line width=0.8pt
    },
    vertex/.style={
        circle,
        draw,
        fill=white,
        inner sep=0pt,
        minimum size=4.6pt
    },
    lab/.style={
        font=\small,
        fill=white,
        inner sep=1pt,
        align=center
    }
]


\coordinate (X0) at ( 0.00,  2.05);
\coordinate (X1) at ( 0.00, -2.05);

\coordinate (Y0) at (-2.05,  0.00);
\coordinate (Y1) at ( 2.05,  0.00);

\coordinate (Z0) at (-0.55,  0.78);
\coordinate (Z1) at ( 0.55, -0.78);


\draw[edge] (X0)--(Y0);
\draw[edge] (X0)--(Y1);
\draw[edge] (X1)--(Y0);
\draw[edge] (X1)--(Y1);

\draw[edge] (X0)--(Z0);
\draw[edge] (X0)--(Z1);
\draw[edge] (X1)--(Z0);
\draw[edge] (X1)--(Z1);

\draw[edge] (Y0)--(Z0);
\draw[edge] (Y0)--(Z1);
\draw[edge] (Y1)--(Z0);
\draw[edge] (Y1)--(Z1);


\node[vertex] at (X0) {};
\node[vertex] at (X1) {};
\node[vertex] at (Y0) {};
\node[vertex] at (Y1) {};
\node[vertex] at (Z0) {};
\node[vertex] at (Z1) {};


\node[lab, above=3pt] at (X0)
    {$X_0$\\[-1pt]$(1)$};

\node[lab, below=3pt] at (X1)
    {$X_1$\\[-1pt]$(fr^{-k})$};

\node[lab, left=6pt] at (Y0)
    {$Y_0$\\[-1pt]$(1)$};

\node[lab, right=6pt] at (Y1)
    {$Y_1$\\[-1pt]$(f)$};

\node[lab, above left=1pt and 2pt] at (Z0)
    {$Z_0$\\[-1pt]$(1)$};

\node[lab, below right=1pt and 2pt] at (Z1)
    {$Z_1$\\[-1pt]$(fr^{\ell})$};

\end{tikzpicture}

\end{minipage}
\hfill
\begin{minipage}[c]{0.53\textwidth}
\centering

\small
\renewcommand{\arraystretch}{1.28}
\setlength{\tabcolsep}{4.5pt}

\begin{tabular}{@{}c|cccc@{}}
\text{edge family}
    & $00$ & $01$ & $10$ & $11$
\\
\hline
$A_{uv}=X_u^{-1}Y_v$
    & $r^0$
    & $f$
    & $fr^{-k}$
    & $r^k$
\\
$B_{vw}=Y_v^{-1}Z_w$
    & $r^0$
    & $fr^{\ell}$
    & $f$
    & $r^{\ell}$
\\
$C_{wu}=Z_w^{-1}X_u$
    & $r^0$
    & $fr^{-k}$
    & $fr^{\ell}$
    & $r^{-k-\ell}$
\end{tabular}

\end{minipage}

\caption{Octahedral endpoint configuration for the dihedral prechord
hulls. Parenthesized expressions indicate the chosen representatives
of the six quotient vertices. The three edge families are oriented
from $X$ to $Y$, from $Y$ to $Z$, and from $Z$ to $X$; the table
records the corresponding quotient elements.}
\label{fig:dihedral}
\end{figure}
There are exactly eight solutions of $abc=1$ in $A^+\times B^+\times C^+$, namely the triples $(A_{uv},B_{vw},C_{wu})$, with $u,v,w\in\{0,1\}$.

Choose simultaneously six representatives in $G$ of the quotient vertices
\[
\overline X_0=\overline Y_0=\overline Z_0=1,
\qquad
\overline X_1=fr^{-k},
\qquad
\overline Y_1=f,
\qquad
\overline Z_1=fr^\ell.
\]
Every endpoint in the table is $x^{-1}y$ for its oriented edge. Fix a boundary labelling supplied by Lemma~\ref{lem:boundary-soundness}. On an edge from representatives $x$ to $y$ with label $E\subseteq H$, use the subset $x^{-1}Ey$ of the corresponding endpoint coset. Fill every nonendpoint coset of $A^+$, $B^+$, $C^+$ completely. Any identity product would project to one of the eight endpoint triples. On that face the lifted product is
\[
(x^{-1}E_Ay)(y^{-1}E_Bz)(z^{-1}E_Cx)
=x^{-1}(E_AE_BE_C)x,
\]
which does not contain $1$. Thus the constructed components form a subset trio.

There are $2(k-1)$, $2(\ell-1)$ and $2(n-k-\ell-1)$ nonendpoint cosets in the respective hulls. Adding the three boundary totals $(x,y,z)$ gives \eqref{eq:D1}, including the cases $k=1$ or $\ell=1$ when one interior is empty. The cancellation around each face shows that the construction is independent of the chosen lifts and does not require the extension to split.
\end{proof}

\begin{proposition}\label{prop:soundness}
Every vector in $\TG$ is the cardinality vector of a subset trio in $G$. Consequently,
\[
\mu_G(r,s)\le\Mstar(r,s)
\]
for every $1\le r,s\le|G|$.
\end{proposition}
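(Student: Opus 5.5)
The plan is a strong induction on $N=|G|$, over all finite metacyclic groups simultaneously. The inductive hypothesis is that for every metacyclic group $G'$ with $|G'|<N$, every vector of $\mathcal T_{G'}$ is the cardinality vector of a subset trio in $G'$. Every recursive reference in the definition of $\TG$ is to $\TsubH$ for a proper subgroup $H$, to $\mathcal T_K$ for a proper subgroup $K<H$, or to $\mathcal T_{G/H}$ for a proper nontrivial quotient. All of these are metacyclic of smaller order by Proposition~\ref{prop:closure}, so the inductive hypothesis makes every recursive input feasible. The base case is the trivial group. There $\TG$ consists exactly of the coordinate planes, and each such vector is realized by taking the zero coordinate to be the empty set, with the other two components arbitrary subsets of the correct sizes.

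For the inductive step, I go through the generating rules in the order they were introduced. Vectors with a zero coordinate are realized exactly as in the base case: an empty component forces $1\notin ABC$. Positive vectors with $a+b+c\le N+1$ are handled by Lemma~\ref{lem:elementary-trios}. Vectors from $B0$, $B1$, $C0$, $C1$, $Q$ and $V$ are feasible by Lemma~\ref{lem:basic-soundness}, whose hypothesis on recursive inputs is supplied by induction. Vectors from $D0$ and $D1$ are feasible by Lemma~\ref{lem:dihedral-lifting}. For $D1$ this relies on Lemma~\ref{lem:boundary-soundness}, whose inputs from $\TsubH$ and $\mathcal T_K$ are again inductively feasible. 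Finally, the set of feasible cardinality vectors is closed under coordinate permutations and coordinatewise decrease by Lemma~\ref{lem:symmetry}. Hence the closure step in the definition of $\TG$ adds no infeasible vectors, and every vector of $\TG$ is feasible.

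The step I expect to need the most care is making the induction formally correct for $W2A$. There the input lies in $\mathcal T_K$ with $K<H<G$, and it is first passed through a beat-type lift inside $H$. So the inductive hypothesis must cover all groups of smaller order, not only those with order $|H|$. The appeal to Lemma~\ref{lem:basic-soundness} for $B1$ must also be applied within $H$ rather than within $G$. Both points are covered by inducting on order over the class of all metacyclic groups.

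For the consequence, fix $1\le r,s\le N$ and let $t$ be maximal with $(r,s,t)\in\TG$. Such a $t$ exists because $(r,s,0)\in\TG$. Take a trio $(A,B,C)$ realizing $(r,s,t)$. Since $1\notin ABC$, we have $AB\cap C^{-1}=\varnothing$, so $|AB|\le N-t=\Mstar(r,s)$. Therefore $\mu_G(r,s)\le|AB|\le\Mstar(r,s)$.
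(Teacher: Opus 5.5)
Your proposal is correct and follows the paper's proof essentially step for step. Both use induction on $|G|$ via Proposition~\ref{prop:closure}, the rule-by-rule soundness lemmas (basic, boundary, dihedral lifting), closure under permutation and decrease via Lemma~\ref{lem:symmetry}, and the bound $|AB|\le N-t$ coming from $AB\cap C^{-1}=\varnothing$. Your extra observation is also correct: the $W2A$ input lies in $\mathcal T_K$ with $K<H<G$, so the induction must range over all metacyclic groups of smaller order. This is a harmless elaboration of a point the paper leaves implicit.
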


\begin{proof}
We proceed by induction on $|G|$. The vectors with a zero coordinate are trivially feasible, and the positive vectors satisfying $a+b+c\le|G|+1$ are feasible by Lemma~\ref{lem:elementary-trios}. Every recursive occurrence of $\mathcal T_H$ or $\mathcal T_{G/H}$ involves a proper subgroup or a proper quotient, and these are metacyclic by Proposition~\ref{prop:closure}. Hence the inductive hypothesis applies.

Lemma~\ref{lem:basic-soundness} proves the feasibility of the rules $B0$, $B1$, $C0$, $C1$, $Q$, and $V$. Lemma~\ref{lem:boundary-soundness} proves the feasibility of every boundary vector used in $D1$, and Lemma~\ref{lem:dihedral-lifting} proves the rules $D0$ and $D1$.

Finally, coordinate permutations and coordinatewise decrease preserve product-freeness by Lemma~\ref{lem:symmetry}. Therefore every vector in $\TG$ is feasible.

In particular, if $(r,s,t)\in\TG$, then there exist subsets $A,B,C\subseteq G$ with
\[
|A|=r,\qquad |B|=s,\qquad |C|=t,
\]
and $1\notin ABC$. Hence $AB\cap C^{-1}=\varnothing$, so $|AB|\le|G|-t$. Maximizing over all such $t$ gives
\[
\mu_G(r,s)\le |G|-\max\{t:(r,s,t)\in\TG\}=\Mstar(r,s).
\]
\end{proof}

\section{Completeness of the arithmetic region}\label{sec:completeness}

We prove the converse to Proposition~\ref{prop:soundness}: every critical configuration in a finite metacyclic group is captured by the recursion. For $u=(u_1,u_2,u_3)$ and $v=(v_1,v_2,v_3)$, write $u\preceq v$ if $u_i\le v_i$ for $i=1,2,3$.

\begin{lemma}\label{lem:monotonicity}
For fixed subgroup and integer parameters, the maps $B1$, $C1$, $D1$, and the boundary maps $W1$, $W2A$, are coordinatewise monotone in their recursive cardinality input.
\end{lemma}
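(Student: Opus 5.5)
The statement is essentially a direct inspection of the formulas, and I will treat each map separately, holding fixed every non-recursive parameter: the subgroup $H$ (and $K$, $L$ where relevant), the integers $j$, $k$, $\ell$, $u$, $v$, $q$, $n$, and, for the boundary maps, the chosen mask of empty and full edges together with the designated faces.

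For $B1$ the output $\bigl(a,jD+b,(q-j-1)D+c\bigr)$ is the input $(a,b,c)$ plus the constant vector $(0,jD,(q-j-1)D)$. Translation by a fixed vector preserves $\preceq$, so $(a,b,c)\preceq(a',b',c')$ gives the same inequality for the outputs. The map $C1$ is handled in exactly the same way, with constant $(kD,\ell D,(q-k-\ell-1)D)$.

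For $W1$, once the mask is fixed, the boundary vector $(x,y,z)$ is the sum, in each of the three edge families, of the cardinalities of the four edges. The designated face contributes $a$, $b$, $c$ to the $A$, $B$, $C$ family sums respectively, one coordinate each. Every other edge contributes a constant $0$ or $D$. So $W1$ is again the input plus a constant vector, and it is monotone.

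For $W2A$, the labels in \eqref{eq:W2A} are
\[
\bigl(a,\ uk_0+b,\ D-(u+1)k_0+c,\ v\ell_0,\ D-v\ell_0\bigr).
\]
Each label is the corresponding input coordinate plus a constant, or is independent of the input. By the fixed positions $(A_{00},B_{00},C_{00},B_{01},C_{10})=(P,Q,R,S,T)$, the label $P$ lies in the $A$ family, $Q$ and $S$ in the $B$ family, and $R$ and $T$ in the $C$ family. The family sums therefore have the form
\[
(x,y,z)=(a,b,c)+\text{const}.
\]
If the labels are first permuted by the allowed symmetries of vertex parts, the input coordinates move to a fixed permutation of the families. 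The subsequent coordinate permutation in the closure of $\WH$ is also fixed. Composition with a fixed coordinate permutation preserves monotonicity when the same permutation is applied to both inputs.

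For $D1$, the recursive input is the boundary vector $(x,y,z)\in\WH$, and the output is $(x,y,z)$ plus the constant
\[
\bigl(2(k-1)D,\ 2(\ell-1)D,\ 2(n-k-\ell-1)D\bigr).
\]
It is therefore monotone.

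The only point needing care, and the step I expect to be the main obstacle, is the bookkeeping for $W2A$. One has to confirm that, after the normalising permutation of the octahedron, each recursive coordinate feeds exactly one family sum and does so with coefficient $+1$. The point to rule out is a single label being counted in two families. This holds because every edge of $K_{2,2,2}$ belongs to exactly one family. I would state this once and note that the admissibility conditions (for instance $1\le a,b,c<k_0$) are range constraints, not part of the map, so they do not affect monotonicity. Whenever both inputs are admissible the inequality holds.
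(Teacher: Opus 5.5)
Your proposal is correct and follows essentially the same route as the paper: each of $B1$, $C1$, $D1$ and $W1$ is the recursive input plus a fixed vector, and for $W2A$ each family total picks up exactly one of $a,b,c$ with coefficient one, because the shared label $P$ is a single edge and is counted once. The paper likewise disposes of the vertex-part symmetries by observing that they only permute the output coordinates, which is your fixed-permutation remark.
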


\begin{proof}
In $B1$ and $C1$, each coordinate is the corresponding input coordinate plus a fixed nonnegative multiple of $|H|$. The same holds for $D1$ as a function of its three boundary totals. For a fixed $W1$ mask each edge-family total is a fixed full-edge contribution plus one coordinate of the designated face. In $W2A$ the shared label $P$ is counted only once, as an edge, although it belongs to two faces. The three family totals therefore contain $a,b,c$, respectively, with coefficient one and with all other summands fixed. Permuting vertex parts only permutes these output coordinates. This proves all the assertions.
\end{proof}

The next three lemmas are stated under the inductive hypothesis that every nontrivial maximal critical subset trio in a proper metacyclic subgroup has its cardinality vector in the corresponding arithmetic region. This hypothesis is closed in Proposition~\ref{prop:completeness}. Downward closure allows a dominating recursive vector to be trimmed to the continuation cardinalities before applying the next rule.

\subsection{Beats and cyclic chords}

\begin{lemma}\label{lem:beat-completeness}
Let $\Phi$ be a maximal critical subset trio occurring as a pure or impure beat in the classification of Proposition~\ref{prop:devos-specialization}. Then its cardinality vector is coordinatewise dominated by a vector of $\TG$.
\end{lemma}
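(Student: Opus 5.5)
The plan is to unpack DeVos's definition of a beat \cite[Section~1.4]{DeVos2013} and match it with the rules $B0$ and $B1$. After replacing $\Phi$ by a similar trio, we may assume $\Phi=(A,B,C)$ is a beat relative to a proper subgroup $H<G$. Similarity only permutes the cardinality vector, and $\TG$ is closed under permutations by construction, so this normalization costs nothing.

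Put $D=|H|$ and $q=[G:H]$. In a beat, one component, say $A$, is contained in a single left coset $xH$. Using the transformation $(A,B,C)\mapsto(x^{-1}A,B,Cx)$, we may assume $A\subseteq H$. Maximality then forces $B$ to be a union of full right cosets $Hy$ together with at most one partial coset, and likewise for $C^{-1}$. To see this, write $B_y=B\cap Hy$. If some right coset meets $B$ but $AB_y\ne Hy$, then either the coset can be filled or the trio is confined to a single "boundary" coset. The right coset of $C^{-1}$ matching that boundary coset is determined as follows: after a further similarity $(A,B,C)\mapsto(A,By^{-1},yC)$ we take the boundary coset to be $H$ itself. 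Criticality, $|A|+|B|+|C|>N$, then shows that the full cosets of $B$ and of $C^{-1}$ fill all $q-1$ non-boundary cosets, apart from a number absorbed by the deficiency count.

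We then split into the two cases of the classification. In the \emph{pure} case the boundary coset is itself full in $B$ and empty in $C$ (or the reverse), and $A$ is all of $H$ by maximality. The vector is therefore $(D,jD,(q-j)D)$ with $1\le j\le q-1$; nontriviality excludes $j=0$ and $j=q$. This is exactly $B0$. In the \emph{impure} case the continuation is $\Phi_2=(A,B\cap H,C\cap H)$, a subset trio in $H$. It is nontrivial by definition of impurity and critical in $H$. Hence, by the inductive hypothesis stated before the lemma, its cardinality vector $(a,b,c)$ is dominated by a vector in $\TsubH$, and by downward closure it lies in $\TsubH$. Counting the full cosets outside $H$ gives at most $jD+b$ for $B$ and at most $(q-j-1)D+c$ for $C$, where $j$ is the number of full right cosets in $B$. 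This is $B1$. Where the counts fall short, the output is still dominated, by the monotonicity in Lemma~\ref{lem:monotonicity} together with coordinatewise decrease.

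I expect the main obstacle to be the precise bookkeeping of which coset is the boundary one. Concretely, one must check that after the left/right similarities every nonboundary coset of $H$ is either fully in $B$ or fully in $C^{-1}$, and never shared or partially occupied. I would handle this by applying maximality directly: if $Hy\neq H$ meets $B$, then $AB\supseteq A(B\cap Hy)$ is contained in $Hy$. Since $C^{-1}$ is the complement of $AB$, as in Lemma~\ref{lem:pair-trio}, we get $C^{-1}\cap Hy=Hy\setminus A(B\cap Hy)$. The beat structure from DeVos says $B\cap Hy$ is $H$-stable, i.e.\ a full coset, for $y\notin H$. If this must instead be derived, I would use $\StabL(B)\supseteq H$ off the boundary coset, as recorded in DeVos's definition. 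Only domination is claimed, so any slack can be absorbed by decreasing coordinates.
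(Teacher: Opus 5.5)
Your proposal follows the paper's proof. You normalize by similarity so that $A\subseteq H$ and read off $B0$ in the pure case. In the impure case you pass to the continuation $(A,B\cap H,C\cap H)$ in $H$, observe that outside $H$ the pieces of $B$ and of $C^{-1}$ are complementary unions of full right $H$-cosets, and apply $B1$.

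One correction: that coset structure comes from DeVos's definition of a beat, as your last paragraph says, not from maximality and criticality. For instance, $(\{1\},B,G\setminus B^{-1})$ with $\StabL(B)=1$ is maximal critical and has $A\subseteq H$ for every $H$, so your first derivation fails and should be dropped. Once you use the definition instead, your counts are exact rather than upper bounds, and the continuation is automatically maximal in $H$, which the inductive hypothesis requires.
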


\begin{proof}
For a pure beat, DeVos's definition \cite[Section~1.5]{DeVos2013} gives, after similarity, $A=H$, $HB=B$ and $C=G\setminus B^{-1}$, with $H<G$. Thus $B$ is a union of $j$ right $H$-cosets, and the cardinality vector is $B0$.

For an impure beat, the same definition gives $A\subseteq H$, $H(B\setminus H)=B\setminus H$ and
\[
C\setminus H=\bigl(G\setminus(AB)^{-1}\bigr)\setminus H.
\]
Its continuation is $(A,B\cap H,C\cap H)$, a nontrivial maximal critical trio in $H$. If its sizes are $(a,b,c)$ and $B\setminus H$ consists of $j$ right $H$-cosets, then $AB\setminus H$ consists of exactly the same cosets. Hence the outer sizes are
\[
\bigl(a,jD+b,(q-j-1)D+c\bigr),
\qquad 0\le j\le q-1.
\]
The induction hypothesis places $(a,b,c)$ in $\TsubH$, and $B1$ gives the required vector. This operation preserves deficiency, since the added sizes total $(q-1)D$. Similarity causes no difficulty because coordinate permutations are allowed in $\TG$.
\end{proof}

\begin{lemma}\label{lem:cyclic-completeness}
Let $\Phi$ be a maximal critical subset trio occurring as a pure or impure cyclic chord. Then its cardinality vector is coordinatewise dominated by a vector of $\TG$.
\end{lemma}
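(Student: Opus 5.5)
Following the pattern of Lemma~\ref{lem:beat-completeness}, I will unpack DeVos's definition of a cyclic chord \cite[Section~1.5]{DeVos2013}. After a similarity, a cyclic chord is attached to a normal subgroup $H\triangleleft G$ with $G/H\cong C_q$, where $D=|H|$. Fix a generator $xH$ of the quotient and identify $G/H$ with $\Z/q\Z$. The projections $\bar A,\bar B,\bar C$ are then arithmetic progressions with common difference $1$, that is, intervals. Similarity only permutes coordinates, and $\TG$ is closed under permutations, so it suffices to treat the normalized trio.

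\emph{Pure case.} Here $A,B,C$ are unions of full $H$-cosets whose projections are intervals of lengths $k,\ell,c'$. Maximality forces $\bar C=-(\bar A+\bar B)$ complemented. In $\Z/q\Z$ we have $|\bar A+\bar B|=k+\ell-1$ when $k+\ell\le q$, so $c'=q-k-\ell+1$. This is exactly the vector of rule $C0$. Criticality excludes $k+\ell>q$, since then $C=\varnothing$ and the trio is trivial.

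\emph{Impure case.} DeVos's definition gives interval projections in which every coset is full except one endpoint coset in each component. The three exceptional cosets multiply to $H$ at the quotient level, and their intersections, translated into $H$, form the continuation trio $\Phi_2$. Suppose $A$ consists of $k$ full cosets plus a partial coset $x^kA_0'$, $B$ of $\ell$ full cosets plus a partial coset $x^\ell B_0$, and $C$ of $q-k-\ell-1$ full cosets plus a partial coset. Then the counting is the same as in the soundness construction for $C1$. The one quotient triple that is not already excluded at the quotient level is the boundary triple. After the conjugation $A_0=x^{-\ell}A_0'x^{\ell}$ and right translation of $C_0$, the boundary condition becomes $1\notin A_0B_0C_0$, so $(A_0,B_0,C_0)$ is a subset trio in $H$ with positive coordinates. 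It is nontrivial maximal critical; it is the continuation. By the inductive hypothesis $(a,b,c)\in\TsubH$, and rule $C1$ yields $(kD+a,\ell D+b,(q-k-\ell-1)D+c)$. The deficiency check is that the full cosets contribute $(q-1)D$ in total, so the deficiency equals that of the continuation. If the continuation produced by DeVos's decomposition is only contained in a trio with vector in $\TsubH$, Lemma~\ref{lem:monotonicity} and downward closure give domination.

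\emph{Main obstacle.} The difficult step is reconciling DeVos's exact formulation of an impure cyclic chord with the normalization used here. His definition may place the partial cosets on both sides of the intervals, or use left cosets for one component and right cosets for another. I would first apply the similarity $(A,B,C)\mapsto(xAy^{-1},yBz^{-1},zCx^{-1})$ to move all three partial cosets to the boundary positions of $C1$: the right end of $\bar A$, the right end of $\bar B$, and the matching end of $\bar C$. Normality of $H$ is what allows left and right cosets to be interchanged. I would then verify that the identification of the continuation as a trio in $H$ uses only conjugation by $x^{\ell}$, which preserves sizes, so that non-splitting of the extension does not matter.
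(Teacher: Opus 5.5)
Your proposal is correct and follows the paper's proof. Both unpack DeVos's definitions, read off $C0$ from the sizes of interval sumsets in the pure case, and in the impure case count full cosets around a continuation in $H$ to land on $C1$, using induction and monotonicity.

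The differences are minor:
\begin{itemize}
\item The paper normalizes all three partial cosets into $H$ itself, so the continuation is literally $(A\cap H,B\cap H,C\cap H)$ and no conjugation by $x^{\ell}$ is needed.
\item The paper derives from maximality that positions $1,\dots,k+\ell$ of $AB$ are full. This yields both $|C|$ and the parameter range required by $C1$. In the impure case you assumed the structure of $C$ rather than deriving it, and you should derive it the same way.
\item In the pure case, $k+\ell>q$ is excluded by nontriviality, not criticality.
\end{itemize}
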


\begin{proof}
Use the definitions in \cite[Section~1.5]{DeVos2013}. For a pure cyclic chord, $H$ is normal, $G/H$ is cyclic of order $q$, and all components are $H$-stable. After similarity the first two quotient supports are basic intervals of lengths $k,\ell$ with common generating ratio. Their product has $k+\ell-1<q$ positions and the third component is its inverse complement. Their sizes are precisely $C0$.

For an impure cyclic chord, after similarity the quotient supports of $A,B$ are the intervals $0,\ldots,k$ and $0,\ldots,\ell$, with $k,\ell\ge1$, and all cosets except the one at $0$ are full. The continuation $(A\cap H,B\cap H,C\cap H)$ is nontrivial maximal critical in $H$. Write its sizes as $(a,b,c)$. Every position $1,\ldots,k+\ell$ is full in $AB$. Since $C\cap H\ne\varnothing$, these positions cannot wrap into $H$, and since $C\setminus H\ne\varnothing$, at least one nonidentity quotient position is left over. Thus $k+\ell\le q-2$. Inside $H$, maximality identifies the third continuation component with the inverse complement of $(A\cap H)(B\cap H)$. Outside $H$, the third component occupies the inverses of the $q-k-\ell-1$ unused positions. The resulting sizes are exactly $C1$, which allows this parameter range. Induction on $H$ therefore gives the desired vector, and the deficiency is again that of the continuation.
\end{proof}

\subsection{Dihedral chords}

\begin{lemma}\label{lem:dihedral-completeness}
Let $\Phi$ be a maximal critical subset trio occurring as a pure or impure dihedral chord in Proposition~\ref{prop:devos-specialization}. Then its cardinality vector is coordinatewise dominated by a vector of $\TG$.
\end{lemma}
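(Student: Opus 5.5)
The proof follows the pattern of Lemmas~\ref{lem:beat-completeness} and~\ref{lem:cyclic-completeness}: unpack DeVos's definitions of pure and impure dihedral chords \cite[Section~1.5]{DeVos2013}, reduce to the quotient $G/H\cong D_n$, and read off the cardinality vector. Similarity is harmless, since $\TG$ is closed under coordinate permutations. We split into three cases: the pure chord, the impure chords of types $1$ and $2A$ (which carry a continuation), and the terminal impure chords of types $0$ and $2B$.

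For a \emph{pure dihedral chord}, $H\triangleleft G$, all three components are unions of $H$-cosets, and $G/H\cong D_n$ with $n\ge3$. After similarity the quotient supports are $\{1,\ldots,r^k\}\{1,f\}$ and $\{1,f\}\{1,\ldots,r^\ell\}$. The third support is the inverse complement of their product $\{1,\ldots,r^{k+\ell}\}\{1,fr^\ell\}$. Criticality forces $k+\ell\le n-2$, so the sizes are exactly those of $D0$. If DeVos normalizes the supports differently, the first step is a similarity in $D_n$ converting them to this form.

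For an \emph{impure dihedral chord}, the quotient picture is the same as in Lemma~\ref{lem:dihedral-lifting}. All nonendpoint cosets of the three hulls are full. The twelve endpoint cosets are indexed by the edges of $K_{2,2,2}$ as in Figure~\ref{fig:dihedral}, and the only quotient identity products are the eight triangular faces. Translating each endpoint set $x^{-1}Ey$ back to a subset $E\subseteq H$ via the chosen representatives gives an edge labelling of $K_{2,2,2}$ by subsets of $H$. In this labelling every face is a subset trio in $H$, and each coordinate of the cardinality vector equals $2(\cdot-1)D$ plus the corresponding family total. It therefore suffices to show that the vector of family totals lies in (or is dominated by a vector of) $\WH$; $D1$ then finishes, using Lemma~\ref{lem:monotonicity} for domination.

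The main obstacle is matching DeVos's types with the masks $W0$--$W2B$. We would argue as follows, using maximality to show each edge label is empty, full, or a genuinely proper set. If at most one face has three nonempty labels, we get $W0$ or $W1$, and for $W1$ the face trio is a continuation in $H$ covered by the induction hypothesis. Types $0$ and $2B$ should give two faces sharing a label $P$ whose other edges are stabilizer-saturated: $Q$ is a union of right $K$-cosets with $R=H\setminus Q^{-1}$, similarly $S,T$ with $L$, and $P=K\cap L$ by maximality. These are exactly $W2B$. Type $1$ should reduce to $W1$. Type $2A$ should give $W2A$, where the face $(P,Q,R)$ is an impure beat in $H$ relative to $K$ with continuation in $K$, and $(P,S,T)$ is a pure beat relative to $L\ge K$. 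Its sizes are then given by Lemma~\ref{lem:beat-completeness} applied inside $H$, together with the induction hypothesis on $K$.

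The delicate points are verifying $K\le L$ and that the positions of the two faces can be normalized to $(A_{00},B_{00},C_{00},B_{01},C_{10})$ by permuting parts and swapping vertices within parts. We would check both directly from DeVos's description of the shared edge. Finally, because maximality may fail to give exact equality in the boundary, we only claim domination and invoke downward closure of $\TG$.
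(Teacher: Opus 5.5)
Your architecture matches the paper's: pure chord $\to D0$, impure chord $\to$ edge labelling of $K_{2,2,2}$ by subsets of $H$ $\to$ mask in $\WH$ $\to D1$, with monotonicity and downward closure. But the content of this lemma is the matching of DeVos's impure types with the masks, and there the proposal has a genuine gap.

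First, type $0$ is misassigned. In a type-$0$ chord every boundary label is empty or full, so it yields $W0$, not $W2B$. Your own face-count branch already covers it, and trying to extract a $W2B$ shape, with proper labels $Q,R,S,T$, from a type-$0$ chord would fail.

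Second, and more seriously, the $W2A$ case is not established. That mask couples three facts:
\begin{itemize}
\item the shared label satisfies $|P|=a$, where $(a,b,c)\in\mathcal T_K$ is the continuation;
\item the impure face is a beat relative to that same $K$;
\item $K\le L$, where $L$ is the subgroup of the pure face.
\end{itemize}
Applying Lemma~\ref{lem:beat-completeness} separately to the two faces inside $H$ gives two unrelated beat subgroups, so it does not produce this shape. Your ``we would check directly'' is exactly the missing step. The paper supplies it from DeVos. The definition of type $2A$ gives $P\subseteq xK\subseteq xL$ with $K\le L<H$. The argument of \cite[Lemma~13.8]{DeVos2013}, via Lemma~13.5(3), identifies the continuation subgroup with a conjugate of the minimal subgroup whose coset contains $P$. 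After conjugating and translating the two opposite vertices independently, one may take $P\subseteq K\le L$ with the partial coset of the impure beat at $K$. Only then do the five $W2A$ cardinalities follow.

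Likewise, counting faces does not show that the remaining configurations have exactly two faces with three nonempty labels sharing one edge. That structure has to come from the type definitions, which Proposition~\ref{prop:devos-specialization} restricts to $0,1,2A,2B$.

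Your maximality argument for the $W0$/$W1$ branch is sound, and slightly more self-contained than the paper's appeal to type definitions:
\begin{itemize}
\item a nonempty label both of whose faces contain an empty label can be enlarged to the full coset without creating an identity product, so maximality forces it to be full;
\item a face with three nonempty labels cannot contain a full one, since its product would then be $H\ni1$.
\end{itemize}

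A minor slip: in the pure case, $k+\ell\le n-2$ is forced by nontriviality (the third component must be nonempty), not by criticality. The deficiency equals $2D$ for every $k,\ell$.
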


\begin{proof}
Let $H\triangleleft G$ be the normal subgroup in DeVos's chord definition, put $D=|H|$ and write $G/H\cong D_n$. A pure dihedral chord has full cosets on the three quotient progressions. Their sizes are those of $D0$, with $k,\ell\ge1$ and $k+\ell\le n-2$; the additional restrictions in DeVos's definition only reduce this range.

For an impure chord, \cite[Sections~2.2--2.4 and Lemma~13.9]{DeVos2013} gives the prechord hulls $A^+,B^+,C^+$ used in Lemma~\ref{lem:dihedral-lifting}. Their twelve endpoint cosets produce an octahedral configuration in $H$. The full interiors consequently contribute
\[
2(k-1)D,\qquad 2(\ell-1)D,\qquad 2(n-k-\ell-1)D,
\]
with $k,\ell\ge1$ and $k+\ell\le n-2$. It remains to account for the boundary itself.

In type $0$ all labels are empty or full, giving $W0$. In type $1$ precisely three labels, forming one face, are proper; they constitute the maximal critical continuation in $H$. Induction puts its cardinalities in $\TsubH$, giving $W1$. In either case every other face has an empty label. Indeed, a face with three nonempty labels and a full label $H$ would have product $H$, contradicting the subset-trio condition.

For type $2A$, DeVos's definition supplies a common label $P\subseteq xK\subseteq xL$ with $K\le L<H$. One face is an impure beat relative to the shared first side, whereas enlarging $P$ to $xL$ makes the second face a pure beat. The proof of Lemma~13.8 of \cite{DeVos2013}, using Lemma~13.5(3), identifies the continuation subgroup with a conjugate of the minimal subgroup whose coset contains the shared label. After conjugating the continuation and normalizing the common coset, we may take $P\subseteq K\le L$. The two opposite vertices can be translated independently, so the partial coset of the impure beat may also be placed at $K$ without changing $P$ or any cardinality. If the continuation in $K$ has sizes $(a,b,c)$, its impure face has sizes
\[
\bigl(a,uk_0+b,D-(u+1)k_0+c\bigr),
\qquad 0\le u\le D/k_0-1.
\]
The pure face, before restricting its first component to $P$, has sizes
\[
(\ell_0,v\ell_0,D-v\ell_0),
\qquad 1\le v\le D/\ell_0-1.
\]
These are the five quantities in $W2A$. The continuation is maximal critical in a proper subgroup, so induction certifies its vector, and Lemma~\ref{lem:boundary-soundness} realizes the two faces simultaneously with their common label $P$.

For type $2B$, DeVos's definition gives
\[
P=xK\cap xL=x(K\cap L).
\] The other two components of each face belong to a pure beat relative to $K$ or $L$. After normalizing the common coset, their five sizes are
\[
(|K\cap L|,uk_0,D-uk_0,v\ell_0,D-v\ell_0),
\]
with precisely the ranges in $W2B$. This is a terminal construction and requires no continuation. In types $2A$ and $2B$ every other face contains an empty edge by the same full-label argument. Interchanges of vertices and permutations of vertex parts put the two proper faces in the designated positions of our masks.

These exhaust the impure subset-trio types: maximal impure chords of type $-1$ do not occur, and Lemmas~13.8--13.9 of \cite{DeVos2013} exclude type $2C$ in the subset-trio setting. Hence every boundary vector belongs to $\WH$, and adjoining the full interiors gives exactly rule $D1$.
\end{proof}

\subsection{Exceptional terminal configurations}

\begin{lemma}\label{lem:exceptional-completeness}
Every exceptional terminal configuration occurring in
Proposition~\ref{prop:devos-specialization} is represented by a vector
generated by rule $V$.
\end{lemma}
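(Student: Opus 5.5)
The plan is to work directly with the three exceptional alternatives (a), (b), (c) of Proposition~\ref{prop:devos-specialization}(3). Alternative (d) has already been excluded there. After passing to a similar trio, we are in $G_m$, with stabilizers $H_1,H_2,H_3$ and $A,B$ given. The key observation is that $H_2=\StabR(A)=\StabL(B)$. Hence $A$ is a union of left $H_2$-cosets and $B$ is a union of right $H_2$-cosets. In each alternative the sizes give $|A|=i|H_2|$ and $|B|=j|H_2|$:
\begin{itemize}
\item in (a), $(i,j)=(2,2)$;
\item in (b), $|A|=3|H_2|$ and $|B|=2|H_2|$, so $(i,j)=(3,2)$;
\item in (c), $|A|=2|H_2|$ and $|B|=3|H_2|$, so $(i,j)=(2,3)$.
\end{itemize}
These are exactly the pairs allowed in rule $V$.

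Next I would invoke maximality. Since $\Phi_m$ is maximal (maximality is preserved by similarity), $C=G_m\setminus(AB)^{-1}$. Writing $A=\bigcup_{\alpha} x_\alpha H_2$ and $B=\bigcup_\beta H_2y_\beta$, we get
\[
AB=\bigcup_{\alpha,\beta}x_\alpha H_2y_\beta ,
\]
so $|C|=|G_m|-z(H_2;x_1,\ldots,x_i;y_1,\ldots,y_j)$. Also $H_2$ is a proper subgroup of $G_m$. Otherwise $A=G_m$, and nonemptiness of $B,C$ would contradict $1\notin ABC$. Hence, when $G_m=G$, rule $V$ applied to $H=H_2$ gives exactly the vector $(iD,jD,N-z)$ up to permutation. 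Since $\TG$ is closed under permutations, and the similarity only permuted coordinates, this settles the case $m=1$.

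The main obstacle is the case where the exceptional configuration is terminal at depth $m>1$, so that it lives in a proper subgroup $G_m<G$. Here I would interpret the statement as saying that the vector is generated by rule $V$ in $\mathcal T_{G_m}$. This is what the completeness induction needs: the terminal trio's vector lies in $\mathcal T_{G_m}$, and the continuation rules $B1$, $C1$, $D1$ then propagate it upward. Lemma~\ref{lem:monotonicity} and downward closure handle any domination, since the continuation is trimmed to the actual sizes. So I would phrase the lemma for an arbitrary metacyclic group playing the role of $G_m$, and apply rule $V$ there, using the recursive definition of $\mathcal T_{G_m}$.

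A final check concerns the side conditions: the normality of $H$ in $H_i$ and $[H_1:H]\mid 16$. These are not needed, because rule $V$ enumerates all coset choices and so records the exact $z$. The remaining point is that the chosen cosets are distinct, which holds because $|A|=i|H_2|$ forces exactly $i$ distinct left cosets, and likewise for $B$.
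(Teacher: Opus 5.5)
Your proposal is correct and follows essentially the same route as the paper. Both arguments read off $(i,j)\in\{(2,2),(3,2),(2,3)\}$ from $H_2=\StabR(A)=\StabL(B)$ and the cardinalities in alternatives (a)--(c), then use maximality to get $|C|=|G_m|-z$ and apply rule $V$ in $\mathcal T_{G_m}$ (the paper's $\Gamma=G_m$). Your omission of the normalization $x_1=y_1=1$ is harmless, because rule $V$ already includes every choice of distinct cosets.
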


\begin{proof}
Let $\Gamma$ denote the ambient group of the exceptional terminal trio
(that is, $\Gamma=G_m$ in
Proposition~\ref{prop:devos-specialization}).
Let $(A,B,C)$ be the representative supplied by that proposition, and let
\[
H_2=\StabR(A)=\StabL(B).
\]
Set $D=|H_2|$. Since the trio is nontrivial, $H_2<\Gamma$.
Since $H_2$ is the right stabilizer of $A$, the set $A$ is a union of
left $H_2$-cosets. Similarly, since $H_2$ is the left stabilizer of $B$,
the set $B$ is a union of right $H_2$-cosets.

In alternative (a), $|A|=2D$ and $|B|=2D$. Thus $A$ is a union of two
left $H_2$-cosets and $B$ is a union of two right $H_2$-cosets,
corresponding to the choice $(i,j)=(2,2)$ in rule $V$.

In alternative (b), $|A|=3D$ and $|B|=2D$, so rule $V$ applies with
$(i,j)=(3,2)$.

In alternative (c), $|A|=2D$ and $|B|=3D$, so rule $V$ applies with
$(i,j)=(2,3)$.

After translating the two components, we may assume that one selected
left coset and one selected right coset are $H_2$ itself. Thus we may
write
\[
A=\bigcup_{\alpha=1}^{i}x_\alpha H_2,
\qquad
B=\bigcup_{\beta=1}^{j}H_2y_\beta,
\]
with $x_1=y_1=1$, replacing $C$ by the third component of the
corresponding similar trio. Since the trio is maximal,
\[
C=\Gamma\setminus(AB)^{-1}.
\]
Indeed, product-freeness gives
\[
C\subseteq \Gamma\setminus(AB)^{-1},
\]
and a strict inclusion would allow $C$ to be enlarged, contradicting
maximality. Therefore
\[
|C|=|\Gamma|-|AB|,
\]
where
\[
AB=
\bigcup_{\alpha=1}^{i}
\bigcup_{\beta=1}^{j}
x_\alpha H_2y_\beta.
\]
This is exactly the quantity computed in rule $V$. Hence the
cardinality vector of $(A,B,C)$ belongs to $\mathcal T_\Gamma$, up to
the coordinate permutations allowed in the definition of the region.
\end{proof}

\begin{proposition}[Completeness]\label{prop:completeness}
Let $G$ be a finite metacyclic group. Every nontrivial maximal critical subset trio in $G$ has cardinality vector coordinatewise dominated by a vector of $\TG$.
\end{proposition}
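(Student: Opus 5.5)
We argue by strong induction on $|G|$. This closes the inductive hypothesis that Lemmas~\ref{lem:beat-completeness}, \ref{lem:cyclic-completeness} and \ref{lem:dihedral-completeness} assume for proper subgroups. When $|G|=1$ there is no nontrivial critical trio, so the statement is vacuous. Suppose the statement holds for every metacyclic group of smaller order. By Proposition~\ref{prop:closure}, it then holds for every proper subgroup and every proper quotient of $G$.

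Let $\Phi$ be a nontrivial maximal critical subset trio in $G$. Proposition~\ref{prop:devos-specialization} gives a chain $G=G_1>\cdots>G_m$ and trios $\Phi_1=\Phi,\ldots,\Phi_m$. We show, by descending induction on $i$, that the cardinality vector of $\Phi_i$ is dominated by a vector of $\mathcal T_{G_i}$.

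For $i=m$, the terminal trio is handled case by case.
\begin{itemize}
\item A pure beat or pure cyclic chord is handled by Lemmas~\ref{lem:beat-completeness} and~\ref{lem:cyclic-completeness}.
\item A pure dihedral chord, or an impure dihedral chord of type $0$ or $2B$, is handled by Lemma~\ref{lem:dihedral-completeness}. None of these cases uses a continuation.
\item The exceptional cases (a)--(c) are handled by Lemma~\ref{lem:exceptional-completeness}.
\end{itemize}
In each case the lemmas are applied in $G_m$ rather than $G$. This is legitimate because $G_m$ is metacyclic.

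For $i<m$, $\Phi_i$ is an impure beat, an impure cyclic chord, or an impure dihedral chord of type $1$ or $2A$, and its continuation is $\Phi_{i+1}$ in $G_{i+1}<G_i$. The corresponding lemma applies to $\Phi_i$ once we know that $\Phi_{i+1}$ has its vector in the region of its group.

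Two points need care here. First, the lemmas assume the continuation's vector lies in $\mathcal T_{G_{i+1}}$ itself, whereas induction only gives domination. Since $\mathcal T_{G_{i+1}}$ is closed under coordinatewise decrease, domination implies membership. Second, the continuation is nontrivial and maximal critical in $G_{i+1}$, as recorded in the proofs of those lemmas. So the descending step is consistent.

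Once $i=1$, this gives the claim for $\Phi$. Lemma~\ref{lem:monotonicity} guarantees that feeding a dominating input into $B1$, $C1$, $W1$, $W2A$ or $D1$ yields a dominating output, so no information is lost along the chain. Similarities only permute coordinates, and $\mathcal T_{G_i}$ is closed under permutations.

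The main obstacle is compatibility between the outer induction on $|G|$ and the chain. For $i\ge2$, $G_i$ is a proper subgroup of $G$. The outer hypothesis therefore already gives the claim for $\Phi_i$ directly whenever $\Phi_i$ is a nontrivial maximal critical trio in $G_i$. The descending induction is then needed only at the single step $i=1$.

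I would first verify from DeVos's definitions that each continuation $\Phi_{i+1}$ is indeed maximal in $G_{i+1}$, not merely critical. If maximality fails, I would extend $\Phi_{i+1}$ to a maximal trio in $G_{i+1}$. Extension preserves criticality and nontriviality, and it only enlarges the vector, so domination by $\mathcal T_{G_{i+1}}$ is preserved, and monotonicity then transports this up to $\Phi_i$.
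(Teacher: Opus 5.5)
Your proposal is correct and follows essentially the same route as the paper: strong induction on $|G|$, the DeVos chain from Proposition~\ref{prop:devos-specialization}, and terminal cases via Lemmas~\ref{lem:beat-completeness}--\ref{lem:exceptional-completeness}. The backward step also matches, going through $B1$, $C1$, and $W1$ or $W2A$ followed by $D1$, with downward closure, monotonicity, and permutation-closure absorbing similarities. Your observation that the outer hypothesis already covers every $\Phi_i$ with $i\ge2$, so only the last step to $\Phi_1$ needs a lemma, is the paper's remark about continuations in smaller groups. Your maximality fallback is harmless but unnecessary, since the completeness lemmas already record the continuations as nontrivial maximal critical trios.
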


\begin{proof}
Proceed by induction on $|G|$. There is no nontrivial subset trio in the trivial group, so the initial case is empty. For a nontrivial maximal critical trio $\Phi$ in $G$, Proposition~\ref{prop:devos-specialization} gives a chain
\[
\Phi=\Phi_1,\ldots,\Phi_t
\]
through
\[
G=G_1>\cdots>G_t.
\]
All these groups are metacyclic by Proposition~\ref{prop:closure}. The inductive hypothesis applies to every continuation in a group of smaller order, and downward closure gives its actual cardinality vector in that group's region.

At a terminal, a pure beat is supplied by $B0$, a pure cyclic chord by $C0$, a pure dihedral chord by $D0$, a type $0$ or $2B$ dihedral chord by its boundary rule followed by $D1$, and an exceptional terminal by $V$. These assertions are Lemmas~\ref{lem:beat-completeness}--\ref{lem:exceptional-completeness}. Moving backwards along the chain, an impure beat applies $B1$ to its certified continuation and an impure cyclic chord applies $C1$. A type $1$ dihedral chord applies $W1$ and $D1$; a type $2A$ chord applies the nested-subgroup construction $W2A$ and then $D1$. Each map is monotone by Lemma~\ref{lem:monotonicity}, and similarity is absorbed by the coordinate permutations allowed in $\TG$. Propagating backward along the chain yields a vector of $\TG$ dominating the cardinality vector of $\Phi$.
\end{proof}

\section{Main theorem}\label{sec:main}

\begin{theorem}\label{thm:exact-region}
For every finite metacyclic group $G$,
\[
\TG=
\bigl\{(a,b,c):\text{there exists a subset trio in $G$ with cardinality vector }(a,b,c)\bigr\}.
\]
\end{theorem}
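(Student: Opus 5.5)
The equality is proved by two inclusions. The inclusion ``$\subseteq$'' is exactly Proposition~\ref{prop:soundness}: every vector of $\TG$ is the cardinality vector of a subset trio in $G$. For the reverse inclusion, fix a subset trio $(A,B,C)$ in $G$ with cardinality vector $(a,b,c)$. We show $(a,b,c)\in\TG$ by splitting into cases according to nontriviality and deficiency.

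\emph{Trivial or noncritical trios.} If some component is empty, then $(a,b,c)$ lies on a coordinate plane. Such vectors are in $\TG$ by construction. If all components are nonempty and $\delta(A,B,C)\le 0$, then $a+b+c\le |G|\le |G|+1$, so the vector is again among the base vectors of $\TG$.

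\emph{Nontrivial critical trios.} Suppose now that $A,B,C\neq\varnothing$ and $\delta>0$. Extend $(A,B,C)$ to a maximal subset trio $(A^+,B^+,C^+)$. As recorded after the definition of maximality, this extension preserves nontriviality and can only increase deficiency, so $(A^+,B^+,C^+)$ is a nontrivial maximal critical trio. Proposition~\ref{prop:completeness} then gives a vector $v\in\TG$ with
\[
(|A^+|,|B^+|,|C^+|)\preceq v .
\]
Since $(a,b,c)\preceq(|A^+|,|B^+|,|C^+|)\preceq v$ and $\TG$ is closed under coordinatewise decrease by definition, we conclude $(a,b,c)\in\TG$.

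\emph{Main obstacle.} Assuming the earlier results, the theorem itself is formal. The delicate point is the induction inside Proposition~\ref{prop:completeness}, whose hypothesis concerns continuations in proper subgroups. One must check that the regions $\mathcal T_H$ and $\mathcal T_{G/H}$ used recursively are the \emph{same} sets for which the present equality holds at smaller orders. This requires the region to depend only on the isomorphism type of the group, which is true because the rules are defined intrinsically from subgroups, quotients and cosets.

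In addition, downward closure in the smaller region must be available when feeding a continuation into $B1$, $C1$, $W1$ or $W2A$. Concretely, if the continuation's actual cardinality vector is dominated by a vector of $\mathcal T_H$, then the actual vector is itself in $\mathcal T_H$, because $\mathcal T_H$ is downward closed. It can therefore be used directly as the rule's input; where a larger dominating input is used instead, Lemma~\ref{lem:monotonicity} transports the domination through the rule.

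To keep everything consistent, I would present the whole argument as a single induction on $|G|$ that proves soundness and completeness simultaneously. With that bookkeeping fixed, the two inclusions above give the stated equality.
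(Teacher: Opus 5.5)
Your proposal is correct and is essentially the paper's proof. Proposition~\ref{prop:soundness} gives one inclusion; for the other, you treat trivial and noncritical trios as base vectors, extend a nontrivial critical trio to a maximal one, apply Proposition~\ref{prop:completeness}, and conclude by downward closure of $\TG$. The extra bookkeeping about a simultaneous induction is harmless but unnecessary, since the paper's soundness and completeness propositions each already run their own induction on $|G|$.
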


\begin{proof}
Proposition~\ref{prop:soundness} gives one inclusion. Conversely, let $(a,b,c)$ be the cardinality vector of a subset trio $\Phi$. If one coordinate is zero, or if $a+b+c\le|G|+1$, then $(a,b,c)$ belongs to $\TG$ by definition.

Otherwise $\Phi$ is nontrivial and critical. Extend it to a maximal subset trio $\Phi^+$. By Proposition~\ref{prop:completeness}, the cardinality vector of $\Phi^+$ is dominated by a vector of $\TG$. Since $\TG$ is closed under coordinatewise decrease, the original vector $(a,b,c)$ also belongs to $\TG$.
\end{proof}

\begin{theorem}[Minimal product sets in finite metacyclic groups]\label{thm:main}
Let $G$ be a finite metacyclic group of order $N$. Then, for every $1\le r,s\le N$,
\[
\mu_G(r,s)=\Mstar(r,s)
=N-\max\{t:(r,s,t)\in\TG\}.
\]
\end{theorem}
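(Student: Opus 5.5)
The plan is to derive Theorem~\ref{thm:main} from Theorem~\ref{thm:exact-region} through the pair--trio correspondence, proving the two inequalities separately. The inequality $\mu_G(r,s)\le\Mstar(r,s)$ is already contained in Proposition~\ref{prop:soundness}. So the work is the reverse inequality $\Mstar(r,s)\le\mu_G(r,s)$. This amounts to showing that, for some minimizing pair, the value $t=N-\mu_G(r,s)$ occurs as a third coordinate $(r,s,t)\in\TG$.

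Fix $1\le r,s\le N$ and choose $A,B\subseteq G$ with $|A|=r$, $|B|=s$ and $|AB|=\mu_G(r,s)$. There are two cases.

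\emph{Case $AB=G$.} Then $\mu_G(r,s)=N$. The vector $(r,s,0)$ lies in $\TG$, because all vectors with a zero coordinate were included. Hence $\max\{t:(r,s,t)\in\TG\}\ge0$, so $\Mstar(r,s)\le N=\mu_G(r,s)$.

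\emph{Case $AB\ne G$.} Put $C=G\setminus(AB)^{-1}$. By Lemma~\ref{lem:pair-trio}, $(A,B,C)$ is a nontrivial subset trio, and $|C|=N-|AB|=N-\mu_G(r,s)$. Theorem~\ref{thm:exact-region} then gives $(r,s,N-\mu_G(r,s))\in\TG$. Therefore $\max\{t:(r,s,t)\in\TG\}\ge N-\mu_G(r,s)$, which is exactly $\Mstar(r,s)\le\mu_G(r,s)$.

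Combining the two cases with Proposition~\ref{prop:soundness} yields equality. The maximum in Definition~\ref{def:Mstar} is well defined, since $(r,s,0)\in\TG$ and $\TG$ is finite.

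The genuine difficulty is not in this final assembly but upstream, in the completeness direction behind Theorem~\ref{thm:exact-region}. There the critical case relies on Proposition~\ref{prop:completeness}, and the delicate point is the dihedral boundary analysis of types $2A$ and $2B$ in Lemma~\ref{lem:dihedral-completeness}. Once that theorem is granted, the only point to check here is that the non-critical trios are handled. They are: a trio with $r+s+|C|\le N+1$ is covered directly by the base vectors of $\TG$, and Theorem~\ref{thm:exact-region} covers every trio regardless of deficiency. So no separate argument is needed.
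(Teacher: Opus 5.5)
Your proof is correct and follows essentially the paper's route: Proposition~\ref{prop:soundness} gives $\mu_G(r,s)\le\Mstar(r,s)$, and applying Theorem~\ref{thm:exact-region} to the trio $(A,B,G\setminus(AB)^{-1})$ built from a minimizing pair gives the reverse inequality. Your separate case $AB=G$ is harmless but unnecessary, since the paper lets $C$ be empty there and the zero-coordinate vectors already lie in $\TG$.
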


\begin{proof}
If $A,B\subseteq G$ satisfy $|A|=r$, $|B|=s$, and $C=G\setminus(AB)^{-1}$, then $(A,B,C)$ is product-free and $|C|=N-|AB|$. Theorem~\ref{thm:exact-region} therefore gives
\[
N-|AB|\le\max\{t:(r,s,t)\in\TG\}.
\]
Taking $A,B$ with $|AB|=\mu_G(r,s)$ gives
\[
\Mstar(r,s)\le\mu_G(r,s).
\]
The reverse inequality is Proposition~\ref{prop:soundness}. Hence
\[
\mu_G(r,s)=\Mstar(r,s).
\]
\end{proof}

\begin{remark}
Theorem~\ref{thm:exact-region} shows that $\TG$ and $\Mstar$ are intrinsic, although the recursion is evaluated through a H\"older presentation. If $AB=G$, the associated trio has empty third component, which is already covered by the zero-coordinate vectors. In particular, $r+s>|G|$ forces $AB=G$ because $A\cap xB^{-1}\ne\varnothing$ for every $x\in G$.
\end{remark}

\section{Consequences and special cases}\label{sec:special}

We close with consequences of Theorem~\ref{thm:main}, including known special cases, an explicit diagonal formula for faithful split groups, a generating-set obstruction, strict intermediate values between the Eliahou--Kervaire bounds, and a nonsplit example.

\subsection{Previously known classes}

For abelian metacyclic groups, Theorem~\ref{thm:main} gives the same intrinsic function as the formula $\mu_G=\kappa_G$ of \cite{EliahouKervairePlagne2003}. For nilpotent metacyclic groups, normal subgroups occur at every divisor order, so the bounds in Theorem~\ref{thm:ek-bounds} coincide; see \cite[Corollary~9]{BerchenkoKogan2012}. The dihedral formula is \cite[Theorem~7]{EliahouKervaire2010}. For nonabelian groups of order $pq$, with $p>q$ odd primes, \cite[Theorem~20]{BerchenkoKogan2012} gives $\mu_G=\NkappaG(r,s)$ when $r,s>q$ and
\[
\left\lceil\frac rq\right\rceil+
\left\lceil\frac sq\right\rceil<p,
\]
and $\mu_G=\kappa_G$ otherwise. These values are therefore recovered by Theorem~\ref{thm:main}. The theorem also covers the inversion-action H\"older family studied by Benavides and Mutis \cite{BenavidesMutis2025}.

\subsection{Faithful split metacyclic groups}

Consider
\[
G=C_p\rtimes_h C_m,
\qquad
\ord_p(h)=m,
\qquad
m\mid p-1,
\]
where $p$ is prime. In the diagonal nonsaturated range, the general theorem specializes to a particularly simple expression.
\begin{proposition}\label{prop:faithful-split}
Assume $2r-1\le p$. Then
\[
\mu_G(r,r)=
\begin{cases}
\displaystyle \min_{d\mid m}f_d(r,r), & r\le m,\\[1ex]
2r-2, & r>m\text{ and }m,r\text{ are even},\\
2r-1, & r>m\text{ otherwise}.
\end{cases}
\]
Moreover,
\[
\kappa_G(r,r)=\min_{d\mid m}f_d(r,r),
\qquad
\NkappaG(r,r)=2r-1.
\]
\end{proposition}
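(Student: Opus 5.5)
Our plan is to compute both sides from the subgroup structure and then compare with $\mu_G$, using Theorem~\ref{thm:ek-bounds} together with Lemma~\ref{lem:divisors-subgroups} for the bounds and Theorem~\ref{thm:main} (or direct constructions) for the exact value.

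\emph{Step 1: the arithmetic bounds.} Since $|G|=pm$ with $m\mid p-1$, the subgroup orders are the divisors $d$ and $pd$ with $d\mid m$. When $2r-1\le p$, every $d\ge p$ gives $f_d(r,r)\ge p\ge 2r-1$, while $d=1$ gives $2r-1$. Hence
\[
\kappa_G(r,r)=\min_{d\mid m}f_d(r,r).
\]
For the normal subgroups we use faithfulness: a normal subgroup either contains $C_p$ or is contained in the centralizer of $C_p$. Because the action is faithful, the centralizer of $C_p$ is $C_p$ itself, so the only normal subgroup not containing $C_p$ is trivial. The normal orders are therefore $1$ and $pd$, which gives $\NkappaG(r,r)=2r-1$.

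\emph{Step 2: the case $r\le m$.} The lower bound $\DkappaG=\kappa_G$ holds by Lemma~\ref{lem:divisors-subgroups}. For $d\mid m$ with $d\le r$, we have $f_d(r,r)\le 2r-1$ provided we choose $d$ appropriately. The minimizing $d$ can be taken to satisfy $d\le r\le m$, so that $\lceil r/d\rceil d$ is at most $m$-ish. Take a complement $C_m=\langle\eta\rangle$ and its subgroup $K$ of order $d$. Choose $A$ to be $\lceil r/d\rceil$ consecutive cosets of the form $\eta^iK$ inside $C_m$, and let $B$ be the same. Then $AB$ lies in the cyclic subgroup $C_m$ and has size $f_d(r,r)$, provided $2\lceil r/d\rceil-1\le m/d$. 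In the remaining case $f_d\ge m$, and $|C_m|=m$ already suffices. Truncating $A$ and $B$ to exactly $r$ elements completes the upper bound.

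\emph{Step 3: the case $r>m$.} Here $\min_{d\mid m}f_d(r,r)\ge 2r-2$ when $m$ is even, with equality at $d=2$ exactly when $r$ is even; otherwise every $d\mid m$ gives at least $2r-1$. The reason is that $d\le m<r$ forces $d(2\lceil r/d\rceil-1)\ge 2r-d+\dots$; a careful parity computation is needed here. So the lower bound from Theorem~\ref{thm:ek-bounds} is $2r-2$ or $2r-1$ accordingly, and the upper bound $2r-1$ follows from $\NkappaG$.

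\emph{Main obstacle.} The hard part is to show, in the case $r>m$ with $m,r$ even, that $2r-2$ is attained, and in all other cases that nothing below $2r-1$ occurs even though some $d\mid m$ might satisfy $f_d<2r-1$.

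For attainment, $m$ even means the unique involution class gives a subgroup $K=\langle\eta^{m/2}\rangle$ of order $2$, which acts on $C_p$ by inversion. We will use the dihedral-type subgroup $C_p\rtimes K\cong D_p$ and the dihedral construction $D0$ (Lemma~\ref{lem:dihedral-lifting} inside this subgroup). Taking $A=\{1,\tau,\dots,\tau^{k}\}K$ and $B=K\{1,\dots,\tau^{\ell}\}$ with $2(k+1)=r$ gives $|AB|=2r-2$ once $r\le p$.

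For the lower bound, we will argue via Theorem~\ref{thm:exact-region}: any trio vector $(r,r,t)$ with $t>|G|-(2r-1)$ is critical. So we enumerate the rules generating such vectors when $r>m$. Rules $B$ and $C$ over subgroups of order $d\mid m$ reduce to $f_d$. Rules over order-$pd$ subgroups and quotient rules $Q$ require $r$ near multiples of $p$, which the condition $2r-1\le p$ excludes. Dihedral rules exist only through the order-$2$ subgroup, and this forces $m$ and $r$ even. Rule $V$ with $|H|\le m$ yields products of size at least $2r-1$, because $r>m$ is incompatible with $|A|=2D\le 2m$ only marginally. Checking this $V$ case, and the $W$ boundary regions, is where we expect the real work to lie.
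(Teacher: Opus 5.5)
Your Step~1, your treatment of $r\le m$, and both upper-bound constructions for $r>m$ are correct and agree with the paper. These constructions are intervals in $P$ for $2r-1$, and the sets $\{1,\tau,\dots,\tau^k\}K$ and $K\{1,\tau,\dots,\tau^k\}$ with $K=\langle\eta^{m/2}\rangle$ for $2r-2$. The gap is the lower bound $\mu_G(r,r)\ge 2r-\varepsilon$ for $r>m$, where $\varepsilon=2$ if $m,r$ are even and $\varepsilon=1$ otherwise. This lower bound is the real content of the proposition.

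Your Step~3 claim is false. You assert that $\min_{d\mid m}f_d(r,r)\ge 2r-2$ for even $m$, and $\ge 2r-1$ otherwise. But whenever $d\mid\gcd(m,r)$ one has $f_d(r,r)=2r-d$.
\begin{itemize}
\item For $m=4$, $r=8$, $p=17$ this gives $\kappa_G(8,8)=f_4(8,8)=12$, whereas the proposition asserts $\mu_G(8,8)=14$.
\item For $m=3$, $r=6$, $p=13$ it gives $\kappa_G(6,6)=9$, against the asserted value $11$.
\end{itemize}
So Theorem~\ref{thm:ek-bounds} only yields $\kappa_G(r,r)$, which can be as small as $2r-m$. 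This strict gap is exactly Corollary~\ref{cor:intermediate}. Your ``main obstacle'' paragraph, which allows $f_d<2r-1$, already contradicts Step~3.

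Your fallback is to read the bound off $\TG$ via Theorem~\ref{thm:exact-region}. That could in principle work, but you do not carry it out, and your heuristics for it are inaccurate.
\begin{itemize}
\item A critical vector $(r,r,t)$ has $t\ge pm-2r+1\ge p(m-1)$. So rules over subgroups of order $d\mid m$ cannot dominate it at all, since they have a coordinate at most $d<r$. They do not ``reduce to $f_d$''.
\item Rules over subgroups of order $pd$ are not excluded by $2r-1\le p$. Rule $B1$ with $j=0$ sends $(a,b,c)\in\mathcal T_H$, with $H=C_p\rtimes C_d$, to $(a,b,(q-1)pd+c)$ with the same deficiency. So an induction over these faithful split subgroups is unavoidable.
\item Rule $V$ with $(i,j)=(2,2)$ and $H$ a complement of order $m$ gives $z\ge 4m-2$. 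Equality is attainable exactly when $-1$ is a power of $h$ modulo $p$, i.e.\ when $m$ is even. So $V$ does reach $2r-2$ at $r=2m$, contrary to your claim. Excluding it for odd $m$ needs an argument.
\item The dihedral and $W$ rules inside $C_p\rtimes\langle\eta^{m/2}\rangle$ also need checking.
\end{itemize}

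The paper instead proves the lower bound directly, which is the missing idea:
\begin{enumerate}
\item It splits $A$ and $B$ into fibres over $G/P\cong C_m$.
\item It applies Cauchy--Davenport fibrewise to obtain a profile $L=|U+V|$ in $C_m\times\mathbb Z$.
\item It uses Kneser's theorem to get a period of order $d>2$ with $L=2r+2u-d$ and $2u<d-\varepsilon$.
\item It combines Vosper's theorem with the action of $h^{m/d}$ on progression directions in $\Fp^\ast/\{\pm1\}$. This shows that at most $t\le 2u+\varepsilon$ of the $d$ relevant fibres are tight.
\item This yields $|AB|\ge L+d-t\ge 2r-\varepsilon$.
\end{enumerate}
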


\begin{proof}
Put $P=C_p$. The subgroup orders are precisely the divisors of $pm$, by
Lemma~\ref{lem:divisors-subgroups}. The normal subgroup orders are $1$ and
$pd$ for $d\mid m$: subgroups containing $P$ are normal because $G/P$ is
cyclic, whereas a normal subgroup $J$ with $J\cap P=1$ satisfies
$[J,P]=1$ and hence $J\le C_G(P)=P$, so $J=1$. Since
$f_{pd}(r,r)\ge p\ge2r-1$,
\[
\DkappaG(r,r)=\kappa_G(r,r)=\min_{d\mid m}f_d(r,r),
\qquad
\NkappaG(r,r)=2r-1.
\]

For $r\le m$, subsets of the cyclic complement attain the first minimum
by the abelian theorem, and the lower bound follows from
Theorem~\ref{thm:ek-bounds}. Suppose henceforth that $r>m$.
Intervals in $P$ give the upper bound $2r-1$. If $m$ and $r$ are even,
let $f=\eta^{m/2}$ and let $I$ be an interval of length $r/2$ in $P$.
Since $h^{m/2}=-1$, the element $f$ acts on $P$ by inversion, and
\[
A=I\{1,f\},\qquad B=\{1,f\}I
\]
satisfy $|A|=|B|=r$ and $|AB|=2r-2$.

Write
\[
A=\bigcup_i A_i\eta^i,
\qquad
B=\bigcup_j B_j\eta^j,
\]
with $A_i,B_j\subseteq\Fp$. Then
\[
(AB)_k=\bigcup_{i+j=k}(A_i+h^iB_j).
\]
Set $a_i=|A_i|$, $b_j=|B_j|$, and
\[
c_k=\max_{\substack{i+j=k\\a_ib_j>0}}(a_i+b_j-1),
\qquad
L=\sum_k c_k.
\]
By Cauchy--Davenport \cite{Cauchy1813,Davenport1935},
$|(AB)_k|\ge c_k$, since $a_i+b_j-1\le2r-1\le p$.
Let $\varepsilon=2$ if $m,r$ are even and $\varepsilon=1$ otherwise.
It remains to consider
\[
L<2r-\varepsilon.
\]

Define
\[
U=\{(i,z):0\le z<a_i\},
\qquad
V=\{(j,z):0\le z<b_j\}
\]
in $C_m\times\Z$. For each $k$, the $k$-fiber of $U+V$ is
\[
\bigcup_{\substack{i+j=k\\a_ib_j>0}}
\{0,\ldots,a_i+b_j-2\}
=
\{0,\ldots,c_k-1\},
\]
hence $|U+V|=L$.

Let $J=\operatorname{Stab}(U+V)$. Since $U+V$ is finite,
$J\le C_m\times\{0\}$; write $|J|=d\mid m$ and $q=m/d$. Put
\[
u=|U+J|-r,
\qquad
v=|V+J|-r.
\]
Kneser's theorem \cite{Kneser1953,Kneser1955}, in the form recalled in
\cite[Theorem~1.2]{DeVos2013}, yields
\[
L\ge2r+u+v-d,
\qquad
u+v<d-\varepsilon.
\]

The $J$-hulls have constant heights on their $q$ cosets. If $R,S$ are
the sums of these heights, then
\[
dR=r+u,\qquad dS=r+v.
\]
Since $0\le u,v<d$ and $u-v=d(R-S)$, we obtain $u=v$ and $R=S$.
Moreover $d\mid L$, and therefore
\[
L=d(2R-1)=2r+2u-d,
\qquad
2u<d-\varepsilon.
\]
The cases $d=1,2$ are impossible, so $d>2$.

As $dR\ge r>m=dq$, the maximal hull heights
\[
\alpha=\max_i a_i,
\qquad
\beta=\max_j b_j
\]
satisfy $\alpha,\beta\ge2$. Choose $J$-cosets on which these maxima
occur, let $X,Y$ be the maximizing indices, and set
\[
x=d-|X|,
\qquad
y=d-|Y|.
\]
Then $x,y\le u$. All $d$ outputs in the sum of these cosets have
profile height
\[
c^\ast=\alpha+\beta-1.
\]
Since $2R\ge\alpha+\beta$ and $2u\le d-\varepsilon-1$,
\[
p\ge2r-1
=2dR-2u-1
\ge d(\alpha+\beta-1)+\varepsilon,
\]
so $c^\ast\le p-2$.

Let $K$ be the set of tight outputs, namely those $k$ with
$|(AB)_k|=c^\ast$, and put $t=|K|$. Join $i\in X$ and $j\in Y$
whenever $i+j\in K$. Every edge gives equality in Cauchy--Davenport,
so by Vosper's theorem \cite{Vosper1956}, the sets
$A_i$, $h^iB_j$, and $(AB)_{i+j}$ are arithmetic progressions with a
common direction. Their directions are unique modulo sign for
cardinalities between $2$ and $p-2$; see
\cite[Lemma~3]{BerchenkoKogan2012}.

Write these directions as
\[
D_i,E_j,F_{i+j}\in\Fp^\ast/\{\pm1\}.
\]
Then
\[
D_i=h^iE_j=F_{i+j},
\qquad
h^{-i}D_i=E_j.
\]
Hence $h^{-i}D_i$ and $E_j$ are constant on each connected component.

Fix a component and an output $k$. If
\[
i=i_0+\ell\frac md
\]
runs through the chosen $J$-coset, then with $g=h^{m/d}$, whose order
is $d$, the equation $F_k=h^iE_j$ reduces in
$\Fp^\ast/\{\pm1\}$ to
\[
[g^\ell]=\text{constant}.
\]
The kernel of
\[
\ell\longmapsto[g^\ell]
\]
has size $1$ for odd $d$ and $2$ for even $d$, hence
\[
e=\gcd(d,2).
\]
Therefore each connected component has at most $et$ edges.

Each vertex of $X$ has degree at least $t-y$, and each vertex of $Y$
has degree at least $t-x$. If $t>\max(x,y)$, then
\[
(t-x)(t-y)\le et.
\]
Since
\[
(t-x)(t-y)-et=t(t-x-y-e)+xy,
\]
we obtain
\[
t\le x+y+e,
\]
with equality only if $xy=0$. The same bound is immediate when
$t\le\max(x,y)$.

If $d$ is odd,
\[
t\le2u+1\le2u+\varepsilon.
\]
If $d$ is even and $\varepsilon=2$, then $t\le2u+2$.
If $d$ is even and $\varepsilon=1$, then $m$ is even and $r$ is odd,
so $u=dR-r$ is positive and odd. Equality $t=2u+2$ would force
$x=y=u>0$, contradicting the equality condition. Thus
\[
t\le2u+\varepsilon.
\]

Exactly $t$ of the selected output fibers are tight; each of the other
$d-t$ has cardinality at least $c^\ast+1$. Hence
\[
|AB|\ge L+d-t
\ge(2r+2u-d)+d-(2u+\varepsilon)
=2r-\varepsilon.
\]
This proves the proposition.
\end{proof}

\subsection{Generating constraints}\label{subsec:generating-constraints}

The function $\mu_G(r,s)$ places no structural restriction on either factor. We compare it with the minimum obtained when the second factor contains the identity, is inverse-closed, and generates $G$; after removing the identity, these are precisely the conditions for an undirected connected Cayley graph. For $1\le s\le |G|$, set
\[
\mathcal B_s(G)
=
\left\{
B\subseteq G:
|B|=s,\;
1\in B,\;
B=B^{-1},\;
\langle B\rangle=G
\right\}.
\]
Whenever $\mathcal B_s(G)\neq\varnothing$, define
\[
\mugen(r,s)
=
\min_{\substack{A\subseteq G,\ |A|=r\\ B\in\mathcal B_s(G)}}|AB|.
\]
Then
\[
\mu_G(r,s)\le \mugen(r,s).
\]
For faithful split metacyclic groups, the inequality is strict in the following diagonal range.

\begin{proposition}[Generating obstruction]\label{prop:generating-obstruction}
Let
\[
G=C_p\rtimes_h C_m,
\qquad
\ord_p(h)=m>2.
\]
Suppose that
\[
m<r,
\qquad
2r-1\le p.
\]
If $A,B\subseteq G$ satisfy
\[
|A|=|B|=r,
\qquad
1\in B,
\qquad
B=B^{-1},
\qquad
\langle B\rangle=G,
\]
then
\[
|AB|\ge 2r.
\]
Equivalently, whenever $\mathcal B_r(G)\neq\varnothing$,
\[
\mugen(r,r)\ge 2r.
\]
\end{proposition}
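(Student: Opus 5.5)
The plan is to rerun the proof of Proposition~\ref{prop:faithful-split} with the target raised from $2r-\varepsilon$ to $2r$. The extra hypotheses on $B$ should remove exactly the slack that produced the extremal examples $I\{1,f\}$ and the intervals in $P$. As there, write $A=\bigcup_i A_i\eta^i$ and $B=\bigcup_j B_j\eta^j$ with fibre sizes $a_i,b_j$. Let $L=\sum_k c_k$, where $c_k$ is the maximal Cauchy--Davenport profile at output $k$. Then $|AB|\ge L$ by Cauchy--Davenport, because $2r-1\le p$. So if $L\ge 2r$ we are done, and we may assume $L\le 2r-1$.

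First I would translate the hypotheses on $B$ into conditions on its support $S=\{j:b_j>0\}\subseteq C_m$.
\begin{itemize}
\item Since $(B_j\eta^j)^{-1}\subseteq P\eta^{-j}$, symmetry of $B$ gives $b_{-j}=b_j$, so $S=-S$.
\item Since $1\in B$, we have $0\in S$.
\item Since $\langle B\rangle=G$ maps onto $G/P\cong C_m$, the set $S$ generates $C_m$.
\item Moreover $S\ne\{0\}$, since otherwise $B\subseteq P$.
\end{itemize}

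Second, as in the proof, form $U,V\subseteq C_m\times\Z$ with $|U+V|=L$ and apply Kneser with stabilizer $J$ of order $d\mid m$. This gives $u=v$, $L=2r+2u-d$, and now only $2u<d$ in place of $2u<d-\varepsilon$. The cases $d=1$ and $d=2$ must be rechecked.
\begin{itemize}
\item If $d=1$, then $L\ge 2r+2u-1$ with $u=0$. This forces $V$ to be a "full-height" progression-like set and leaves $L=2r-1$. Equality in Kneser for the $C_m\times\Z$ profile means $U,V$ are stacked intervals whose $C_m$-supports are arithmetic progressions with a common difference. A symmetric progression in $C_m$ containing $0$ that generates $C_m$ and has $|S|\ge2$ forces a concrete shape. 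From that shape I would show that either $L\ge2r$ or some output fibre exceeds its profile.
\item If $d=2$, then $m$ is even and $u=0$. The same reasoning applies, now with $J=\{0,m/2\}$.
\end{itemize}

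Third, for $d>2$ I would repeat the tight-output argument. With $t$ the number of outputs $k$ with $|(AB)_k|=c^\ast$, it yields
\[
|AB|\ge L+d-t\ge 2r+2u-t,\qquad t\le x+y+e .
\]
So it suffices to prove $t\le 2u$. Here symmetry helps: $B_{-j}=h^{-j}B_j^{-1}$ up to the $\eta$-bookkeeping, so the directions satisfy $E_{-j}=h^{-j}\cdot(-E_j)$. This ties the direction of the $Y$-coset at $j$ to the one at $-j$. Combined with the fact that the $J$-coset of $Y$ together with its negative must sit inside a generating symmetric support, this should cut the bound on the number of edges per connected component from $et$ down by one unit. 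It would also exclude the equality configuration $x=y=u$ (or $xy=0$ with $t=x+y+e$) that the previous proof tolerated. This gives $t\le x+y+e-1\le 2u$ in the relevant parity cases.

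I expect this equality analysis for $d>2$ to be the main obstacle. The point is to show that Vosper's common-direction structure, twisted by $h^i$ with $\ord_p(h)=m>2$, is incompatible with a support that is symmetric, contains $0$ and generates $C_m$. The hypothesis $m>2$ enters here, since for $m=2$ the inversion twist is trivial modulo $\pm1$. My first attempt would be to show that any tight component must lie inside a single $J$-coset pair $\{Y,-Y\}$. I would then use generation to produce a non-tight output, adding one to the count $d-t$.
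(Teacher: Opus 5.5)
Your proposal has a genuine gap. You route everything through the fibre/Kneser/Vosper machinery of Proposition~\ref{prop:faithful-split}, but every step that would actually use the new hypotheses is only announced (``should cut'', ``I would show''). The one quantitative improvement you do commit to is also too weak. For $d>2$ you need $t\le 2u$, but $t\le x+y+e-1$ with $x,y\le u$ only gives $t\le 2u+1$ when $d$ is even, that is, $|AB|\ge 2r-1$.

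This is not a corner case. Take $m$ even, $r=2m$, $Q=\langle\eta\rangle$, and $A=Q\cup\tau Q$, $B=Q\cup Q\tau$. Then $1\in B$, $\langle B\rangle=G$, all fibre sizes are $a_i=b_j=2$, $d=m$, and $u=x=y=0$. There are exactly $t=2=2u+e$ tight outputs ($k=0$ and $k=m/2$), so $|AB|=2r-2$. For $r>m$, generation is equivalent to your condition that the support generates $C_m$, since a subgroup mapping onto $G/P$ is $G$ or a complement of order $m<r$. So generation contributes nothing to the tight-output count. Symmetry alone must remove \emph{two} tight outputs when $d$ is even, and your sketch gives no mechanism for this: the relation $E_{-j}=h^{-j}E_j$ is stated but never turned into a bound.

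The cases $d=1$ with $L=2r-1$ and $d=2$ with $L=2r-2$ were vacuous in Proposition~\ref{prop:faithful-split}, but they are now genuinely populated. For instance, profiles $k+1,k,\dots,k$ on all of $C_m$ for both factors, with $k$ even, give $d=1$ and $L=2r-1$, and are compatible with a symmetric generating $B\ni 1$. For these cases you offer only a description of the equality case that Kneser's theorem does not supply (that would need a Kemperman-type structure theorem). You then give the unproved promise that some fibre exceeds its profile; for $d=2$ you would even need a total excess of two.

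The paper avoids the fibre analysis entirely. It applies DeVos's Corollary~14.4 to the pair $(A,B)$: if $H$ is the minimal subgroup with $A\subseteq xH$, then $B\subseteq Hy$, and $1\in B$, $\langle B\rangle=G$ give $H=G$ in one line. The four structural alternatives are then excluded by group theory:
\begin{itemize}
\item A proper subgroup has order at most $pm/2$, so the first alternative forces $|AB|>p$.
\item Every nontrivial normal subgroup contains $P$. This makes $|KB\setminus B|>|K|$ in the cyclic alternative and rules out a dihedral quotient when $m>2$.
\item In the exceptional alternative $|K_2|=m$, so after conjugation $K_2=Q$ and $B\subseteq Q\cup Qy$. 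Symmetry forces $b^2\in Q$, hence $h^i=-1$, for each $b=\tau^a\eta^i\in B\cap Qy$. Thus $|B\cap Qy|\le 1$, so $r=m+1$, and then $\delta+2m-2<m$ is impossible.
\end{itemize}
That last step is exactly what kills configurations like $Q\cup Q\tau$ once symmetry is imposed. An argument playing this role is what your plan lacks.
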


\begin{proof}
Suppose, to the contrary, that
\[
|AB|<2r,
\]
and put
\[
\delta=2r-|AB|>0.
\]
Fix $x\in A$ and let $H\le G$ be the unique minimal subgroup for which
\[
A\subseteq xH.
\]
Choose $y\in B$. DeVos's Corollary~14.4 \cite{DeVos2013} gives $B\subseteq Hy$ and one of four structural alternatives. Since $1\in B$, we have $y\in H$, hence $Hy=H$ and $B\subseteq H$. Because $B$ generates $G$,
\[
H=G.
\]

Write
\[
P=C_p\triangleleft G.
\]
We first record two elementary consequences of the faithful semidirect-product structure.

Every proper subgroup $K<G$ satisfies
\[
|K|\le\frac{|G|}{2}=\frac{pm}{2}.
\]
Indeed, if $p\nmid |K|$, then $|K|\mid m$, hence $|K|\le m\le pm/2$. If $p\mid |K|$, then the unique Sylow $p$-subgroup $P$ is contained in $K$, and
\[
|K|=pd
\]
for a proper divisor $d$ of $m$; hence $d\le m/2$.

Moreover, every nontrivial normal subgroup of $G$ contains $P$. To see this, let $K\triangleleft G$ and suppose $K\cap P=1$. Then
\[
[K,P]\le K\cap P=1,
\]
so $K\le C_G(P)$. Faithfulness of the action gives
\[
C_G(P)=P,
\]
and consequently $K=1$. Thus, if $1\ne K\triangleleft G$, then $P\le K$, and in particular $G/K$ is cyclic.

We exclude the four alternatives of Corollary~14.4 in turn.

In the first alternative, either $AB=G$, which is impossible because
\[
|AB|\le 2r-1\le p<pm=|G|,
\]
or there exist a proper subgroup $K<G$ and $z\in G$ such that
\[
x(G\setminus zK)y\subseteq AB.
\]
Hence
\[
|AB|
\ge |G|-|K|
\ge \frac{pm}{2}
>p,
\]
again contradicting $|AB|\le p$.

In the second alternative, there exists $K\triangleleft G$ such that $G/K$ is cyclic of order at least four. Since $G$ is nonabelian, $K\ne1$, and therefore
\[
P\le K,
\qquad
|K|\ge p>r.
\]
Since $B$ generates $G$, it is not contained in $K$. Because $1\in B$, the set $KB$ contains $K$ and at least one further left $K$-coset. Consequently,
\[
|KB\setminus B|
=
|KB|-|B|
\ge 2|K|-r
>|K|.
\]
This contradicts both possible inequalities in the second alternative of Corollary~14.4.

In the third alternative, there exists $K\triangleleft G$ such that $G/K$ is a nonabelian dihedral group. If $K\ne1$, then $P\le K$ and $G/K$ is cyclic, a contradiction. If $K=1$, then $G$ itself would be dihedral. This is impossible when $m>2$. If $m$ is odd, then $|G|=pm$ is odd. If $m$ is even and $G$ were dihedral of order $pm$, every element of order greater than two would lie in the cyclic rotation subgroup of order $pm/2$. But $G$ contains the element $\eta$ of order $m>2$, while
\[
m\nmid \frac{pm}{2}
\]
because $p$ is odd.

It remains to exclude the fourth alternative. Let
\[
d=|K_2|.
\]
Corollary~14.4 gives
\[
|AK_2|=2d=|K_2B|
\]
and
\[
\delta
+
|AK_2\setminus A|
+
|K_2B\setminus B|
=
|K_1|
<d.
\]
Since $A\subseteq AK_2$ and $B\subseteq K_2B$,
\[
\delta+4d-2r<d.
\]
In particular,
\[
r\le2d
\]
and
\[
3d<2r-\delta<2r\le p+1.
\]
Hence $d<p$. Since $d\mid pm$, it follows that $d\mid m$. If $d$ were a proper divisor of $m$, then $d\le m/2$, and therefore
\[
r\le2d\le m,
\]
contrary to $r>m$. Thus
\[
d=m.
\]

Hence $K_2$ is a complement to $P$. Every such complement is $P$-conjugate to the standard complement
\[
Q=\langle\eta\rangle\cong C_m.
\]
Indeed, the projection $K_2\to G/P$ is an isomorphism, so $K_2$ has a generator of the form $\tau^a\eta^u$ with $\gcd(u,m)=1$. Since $h^u\ne1$, conjugation by a suitable $\tau^t$ sends this generator to $\eta^u$. Conjugating the entire configuration preserves all the assumptions, so we may assume
\[
K_2=Q.
\]

Now
\[
|QB|=2m.
\]
Since $1\in B$, one of the two left $Q$-cosets in $QB$ is $Q$ itself. Thus, for some $y\notin Q$,
\[
B\subseteq Q\cup Qy.
\]
Let $b\in B\cap Qy$. Since $B=B^{-1}$ and $b\notin Q$, we also have
\[
b^{-1}\in B\cap Qy.
\]
Thus $Qb=Qb^{-1}$, which implies
\[
b^2\in Q.
\]

Write
\[
b=\tau^a\eta^i.
\]
Since $b\notin Q$, one has $a\ne0$, and
\[
b^2
=
\tau^{a(1+h^i)}\eta^{2i}.
\]
Therefore $b^2\in Q$ forces
\[
h^i=-1.
\]
There is at most one residue class $i\pmod m$ with this property. On the other hand, a fixed left coset $Qy$ contains exactly one element with each possible image in $G/P\cong C_m$. Hence
\[
|B\cap Qy|\le1.
\]
It follows that
\[
r=|B|\le m+1.
\]
Since $r>m$, necessarily $r=m+1$. Substituting $d=m$ and $r=m+1$ into the inequality obtained from the fourth alternative gives
\[
\delta+2m-2<m,
\]
which is impossible because $\delta\ge1$ and $m>2$.

All four alternatives are impossible, so
\[
|AB|\ge2r.
\]
\end{proof}

\begin{corollary}[Strict generating gap]\label{cor:generating-gap}
Under the hypotheses of Proposition~\ref{prop:generating-obstruction}, assume that $\mathcal B_r(G)\neq\varnothing$. Then
\[
\mugen(r,r)>\mu_G(r,r).
\]
More precisely,
\[
\mugen(r,r)-\mu_G(r,r)
\ge
\begin{cases}
2, & m\text{ and }r\text{ are even},\\[1mm]
1, & \text{otherwise}.
\end{cases}
\]
In particular, no pair attaining the unrestricted minimum $\mu_G(r,r)$ can have its second factor symmetric, containing the identity, and generating $G$.
\end{corollary}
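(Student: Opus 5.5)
This follows by combining Proposition~\ref{prop:generating-obstruction} with the exact value of $\mu_G(r,r)$ from Proposition~\ref{prop:faithful-split}.

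\textbf{Step 1 (check the hypotheses).} The hypotheses of Proposition~\ref{prop:generating-obstruction} are
\[
G=C_p\rtimes_h C_m,\qquad \ord_p(h)=m>2,\qquad m<r,\qquad 2r-1\le p.
\]
Faithfulness gives $m\mid p-1$, so $G$ lies in the family of Proposition~\ref{prop:faithful-split}. Moreover $2r-1\le p$ is exactly that proposition's range. Since $r>m$, its second and third cases apply:
\[
\mu_G(r,r)=2r-\varepsilon,
\]
where $\varepsilon=2$ when $m$ and $r$ are both even, and $\varepsilon=1$ otherwise.

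\textbf{Step 2 (bound the constrained minimum).} Assume $\mathcal B_r(G)\neq\varnothing$. Then $\mugen(r,r)$ is a minimum over a nonempty finite family. Each member of that family satisfies the hypotheses of Proposition~\ref{prop:generating-obstruction}, so each has $|AB|\ge 2r$. Hence
\[
\mugen(r,r)\ge 2r.
\]

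\textbf{Step 3 (subtract).} Combining the two steps gives
\[
\mugen(r,r)-\mu_G(r,r)\ge 2r-(2r-\varepsilon)=\varepsilon\ge1.
\]
This yields both the strict inequality and the displayed case bounds.

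\textbf{Step 4 (the final sentence).} Suppose a pair $(A,B)$ attains $\mu_G(r,r)$ and $B$ is symmetric, contains $1$, and generates $G$. Then $B\in\mathcal B_r(G)$, so
\[
\mu_G(r,r)=|AB|\ge\mugen(r,r),
\]
which contradicts Step~3.

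\textbf{Main obstacle.} The only point needing care is Step~1: confirming that the hypotheses actually place us in the $r>m$ branch of Proposition~\ref{prop:faithful-split}. The rest is direct arithmetic, and all the substantive work has already been done in the two cited propositions.
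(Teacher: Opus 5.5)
Your proposal is correct and matches the paper's proof: the bound $\mugen(r,r)\ge 2r$ from Proposition~\ref{prop:generating-obstruction} is combined with the $r>m$ branch of Proposition~\ref{prop:faithful-split}. Your explicit checks that $\ord_p(h)=m$ forces $m\mid p-1$, and that the final sentence follows by contradiction, are details the paper leaves implicit.
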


\begin{proof}
Proposition~\ref{prop:generating-obstruction} gives
\[
\mugen(r,r)\ge2r.
\]
Since $r>m$, Proposition~\ref{prop:faithful-split} gives
\[
\mu_G(r,r)=
\begin{cases}
2r-2, & m\text{ and }r\text{ are even},\\
2r-1, & \text{otherwise}.
\end{cases}
\]
The result follows.
\end{proof}

\begin{remark}
The condition $\mathcal B_r(G)\ne\varnothing$ is nonvacuous whenever parity permits. For odd $r$, start from $\{1,\eta,\eta^{-1},\tau,\tau^{-1}\}$ and add inverse pairs from $P$. If $r$ is even and $m$ is even, also include the involution $\eta^{m/2}$. The bound $2r-1\le p$ leaves enough inverse pairs. If $m$ is odd, then $|G|$ is odd and every inverse-closed subset containing $1$ has odd cardinality.
\end{remark}

\begin{remark}[Cayley interpretation]\label{rem:cayley-interpretation}
If $B\in\mathcal B_s(G)$ and
\[
S=B\setminus\{1\},
\]
then $S=S^{-1}$, $\langle S\rangle=G$, and $\operatorname{Cay}(G,S)$ is connected. Moreover, for every $A\subseteq G$,
\[
|AB|-|A|
=
|AS\setminus A|.
\]
Consequently,
\[
\mugen(r,s)-r
=
\min_{\substack{
S=S^{-1},\ 1\notin S\\
|S|=s-1,\ \langle S\rangle=G
}}
\;
\min_{|A|=r}|AS\setminus A|.
\]
Thus $\mugen(r,s)-r$ is the lower envelope, at cardinality $r$, of the outer vertex-isoperimetric profiles of connected Cayley graphs of $G$ with symmetric generating sets of size $s-1$. This places \(\mu_G^{\mathrm{gen}}\) within the isoperimetric framework for additive and product-set problems developed by Hamidoune \cite{Hamidoune1996,Hamidoune1999}.
\end{remark}

\subsection{Intermediate values}

The two classical arithmetic bounds do not exhaust the possible values of the minimum product-set function.

\begin{corollary}\label{cor:intermediate}
Let $m>2$ be even, let $r=km$ with $k\ge2$, and let $p\equiv1\pmod m$ be prime with $2r-1\le p$. For
\[
G=C_p\rtimes_h C_m,
\qquad
\ord_p(h)=m,
\]
one has
\[
\DkappaG(r,r)=\kappa_G(r,r)=2r-m
<\mu_G(r,r)=2r-2
<\NkappaG(r,r)=2r-1.
\]
For each fixed $m$ and $k$ satisfying these assumptions, this gives infinitely many groups.
\end{corollary}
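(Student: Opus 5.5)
The plan is to read off all three quantities from Proposition~\ref{prop:faithful-split}, whose hypotheses hold. Indeed $m\mid p-1$ because $p\equiv1\pmod m$, and such an $h$ with $\ord_p(h)=m$ exists because $\Fp^\ast$ is cyclic. Moreover $2r-1\le p$ is assumed, and $r=km>m$ since $k\ge2$. Proposition~\ref{prop:faithful-split} then gives $\NkappaG(r,r)=2r-1$ and $\kappa_G(r,r)=\min_{d\mid m}f_d(r,r)$. Lemma~\ref{lem:divisors-subgroups} gives $\DkappaG=\kappa_G$. Finally, $m$ is even and $r=km$ is even, so the second case applies and $\mu_G(r,r)=2r-2$.

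The only computation left is $\min_{d\mid m}f_d(r,r)=2r-m$.

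For $d\mid m$ we have $d\mid r$, so
\[
f_d(r,r)=d\bigl(2r/d-1\bigr)=2r-d.
\]
This is minimized at the largest divisor $d=m$, where it equals $2r-m$.

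The strict inequalities then follow from $m>2$, which gives $2r-m<2r-2<2r-1$.

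For the infinitude claim, fix $m$ and $k$. Dirichlet's theorem gives infinitely many primes $p\equiv1\pmod m$, and all but finitely many of them satisfy $p\ge 2km-1$. Distinct primes give groups of distinct orders $pm$, so these groups are pairwise non-isomorphic.

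There is no real obstacle here. The one point to watch is that $r>m$ is strict, which is needed to land in the non-abelian case of Proposition~\ref{prop:faithful-split}; this is exactly why the statement requires $k\ge2$.
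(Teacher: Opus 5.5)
Your proposal is correct and follows the paper's own proof. Both read all the values off Proposition~\ref{prop:faithful-split} in the case $r>m$ with $m,r$ even, compute $f_d(r,r)=2r-d$ for each $d\mid m$ since $d\mid r$, use $m>2$ for the strict inequalities, and invoke Dirichlet's theorem for infinitely many primes $p\equiv1\pmod m$ with $p\ge 2r-1$; your extra checks (an $h$ of order $m$ exists in the cyclic group $\Fp^\ast$, $k\ge2$ gives $r>m$ strictly, and distinct orders $pm$ give non-isomorphic groups) only make explicit what the paper leaves implicit.
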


\begin{proof}
Every $d\mid m$ divides $r$, so $f_d(r,r)=2r-d$. Proposition~\ref{prop:faithful-split} gives the displayed equalities, and $m>2$ gives strictness. Dirichlet's theorem on primes in arithmetic progressions gives infinitely many primes $p\equiv1\pmod m$; discarding finitely many of them ensures $2r-1\le p$.
\end{proof}

For example, the values present in Table \ref{tab:intermediate} follow directly from Corollary~\ref{cor:intermediate}; none of these requires a numerical search.

\begin{table}[htpb]
\caption{Examples of strict intermediate values between the Eliahou--Kervaire bounds.}\label{tab:intermediate}
\centering
\begin{tabular}{@{}ccccc@{}}
\toprule
$G$ & $r=s$ & $\DkappaG(r,r)=\kappa_G(r,r)$ & $\mu_G(r,r)$ & $\NkappaG(r,r)$\\
\midrule
$C_{17}\rtimes C_4$ & $8$  & $12$ & $14$ & $15$\\
$C_{31}\rtimes C_6$ & $12$ & $18$ & $22$ & $23$\\
$C_{37}\rtimes C_6$ & $18$ & $30$ & $34$ & $35$\\
\bottomrule
\end{tabular}
\end{table}

In each row the action is faithful. The condition $2r-1\le p$ holds in all three cases.

\subsection{A nonsplit example}

Consider $K=K(9,57,19,4)$ in Yang's convention. Here
\[
4^9\equiv1\pmod{57},
\qquad
19(4-1)\equiv0\pmod{57},
\]
so $|K|=513$. Since $\eta^9=\tau^{19}$ has order three and the image of $\eta$ in $K/\angles{\tau}$ has order nine, $\eta$ has order $27$. Consequently
\[
H=\angles{\eta^3}
\]
has order nine, and
\[
H\cap\angles{\tau}=\angles{\eta^9}=\angles{\tau^{19}}
\]
has order three. The subgroup
\[
N=\angles{\tau,\eta^3}
\]
is the preimage of the order-three subgroup of $K/\angles{\tau}\cong C_9$, hence is normal of order $171$ and index three. The corresponding extension $1\to\angles{\tau}\to K\to C_9\to1$ is nonsplit. Indeed,
\[
\sum_{j=0}^{8}4^j
\equiv(1+4+16)(1+7+49)
\equiv0\pmod{57},
\]
so
\[
(\tau^b\eta)^9=\tau^{19}\ne1
\]
for every $b$. A complement of order nine would contain an element mapping to $\eta\angles{\tau}$ whose ninth power is one, a contradiction. Thus the H\"older extension with kernel $\angles{\tau}$ is nonsplit. The same abstract group also has the split presentation
\[
C_{19}\rtimes_4 C_{27},
\]
generated by $\tau^3$ and $\eta$: their cyclic subgroups have coprime orders $19$ and $27$ and together generate $K$ because $\eta^9=\tau^{19}$.

Take
\[
A=B=N\cup\eta H.
\]
The two cosets of $N$ involved are distinct, hence
\[
|A|=|B|=171+9=180.
\]
Normality of $N$, the inclusion $H\le N$, and $\eta H=H\eta$ give
\[
AB=N\cup\eta N\cup\eta^2H,
\qquad
|AB|=171+171+9=351.
\]
These three pieces are disjoint, as their images in $K/N$ are distinct. To certify the lower bound independently of this construction, the divisors of $513$ and their associated bounds are shown in Table~\ref{tab:nonsplit}.

\begin{table}[htpb]
\caption{Divisor bounds for the nonsplit H\"older presentation $K(9,57,19,4)$.}\label{tab:nonsplit}
\centering
\begin{tabular}{@{}c*{8}{r}@{}}
\toprule
$d$ & 1 & 3 & 9 & 19 & 27 & 57 & 171 & 513\\
\midrule
$f_d(180,180)$ & 359 & 357 & 351 & 361 & 351 & 399 & 513 & 513\\
\bottomrule
\end{tabular}
\end{table}

Theorem~\ref{thm:ek-bounds} therefore gives
\[
\mu_K(180,180)\ge D_{\kappa_K}(180,180)=351.
\]
Together with the explicit product above, this proves
\[
\mu_K(180,180)=351
\]
and hence, by Theorem~\ref{thm:main},
\[
M_K^{\ast}(180,180)=351.
\]
Thus the displayed construction attains the exact minimum.

\begin{remark}
Exhaustive comparisons with direct product-set enumeration were carried out for all valid Hölder presentations of order at most \(12\). The dihedral boundary constructions \(W0,W1,W2A,W2B\) were additionally checked through \(19\,202\) independent concrete lifts, including split and nonsplit extensions and examples with nonabelian kernel.
\end{remark}


\section*{Statements and Declarations}

\noindent\textbf{Funding.}
No funding was received to assist with the preparation of this manuscript.

\medskip
\noindent\textbf{Competing Interests.}
The author declares no competing interests.

\medskip
\noindent\textbf{Data Availability.}
No datasets were generated or analyzed in this study.

\medskip
\noindent\textbf{Code Availability.}
Code and reproducibility materials for the computational checks are available at the \href{https://github.com/Santymax98/papers/tree/main/Jimenez_2026_Product_sets}{project repository}.

\medskip
\noindent\textbf{Author Contributions.}
Santiago Jimenez Ramos: Conceptualization, Methodology, Formal analysis, Investigation, Software, Validation, Writing -- original draft, Writing -- review and editing.

\bibliographystyle{unsrt}
\bibliography{references}

\end{document}